\theoremstyle{thmstyleone}%
\newtheorem{theorem}{Theorem}
\newtheorem{proposition}[theorem]{Proposition}%
\theoremstyle{thmstyletwo}%
\newtheorem{example}{Example}%
\newtheorem{remark}{Remark}%
\theoremstyle{thmstylethree}%
\newtheorem{definition}{Definition}%
\newtheorem{assumption}[theorem]{Assumption}
\newcommand{\mR}{\mathbb{R}}
\newcommand{\mC}{\mathbb{C}}
\newcommand{\X}{\mathcal{X}}
\newcommand{\D}{\mathcal{D}}
\newcommand{\K}{\mathcal{K}}
\newcommand{\C}{\mathcal{C}}
\newcommand{\cO}{\mathcal{O}}
\newcommand{\V}{\mathcal{V}}
\newcommand{\Y}{\mathcal{Y}}
\newcommand{\U}{\mathcal{U}}
\newcommand{\E}{\mathcal{E}}
\newcommand{\F}{\mathcal{F}}
\newcommand{\cL}{\mathcal{L}}
\newcommand{\cH}{\mathcal{H}}
\newcommand{\pperp}{\perp \!\!\!\perp}
\newcommand{\bq}{\begin{equation}}
\newcommand{\eq}{\end{equation}}
\newcommand{\bma}{\begin{bmatrix}}
\newcommand{\ema}{\end{bmatrix}}
\begin{document}

\title{Symmetry in linear physical systems}

\author{Arjan van der Schaft, Rodolphe Sepulchre, Thomas Chaffey
\thanks{A.J. van der Schaft is with the Bernoulli Institute for Mathematics, Computer
Science and Artificial Intelligence, Jan C. Willems Center for Systems and Control, University of Groningen, PO Box 407, 9700 AK, the
Netherlands, {\tt\small A.J.van.der.Schaft@rug.nl}, R. Sepulchre is with the Dept. of Electrical Engineering (STADIUS), KU Leuven, Belgium, and Dept. of Engineering (Control Group), University of Cambridge, UK, {\tt\small rodolphe.sepulchre@kuleuven.be}, T. Chaffey is with School of Electrical and Computer Engineering, University of Sydney, Australia, {\tt\small thomas.chaffey@sydney.edu.au}.}
}

\maketitle

\abstract{Physical systems with symmetry arise abundantly in applications, and are endowed with interesting mathematical structures. The present paper focusses on linear reciprocal and input-output Hamiltonian systems. Their characterization is studied from an input-output as well as from a state point of view. Geometrically, it turns out that they both define Lagrangian subspaces with corresponding generating functionals. Furthermore, the relations with time reversibility are analyzed. The system classes under consideration are expected to admit scalable control laws, and to be important building blocks in design.}

\section{Introduction}\label{sec1}

In this paper we will be concerned with linear physical system classes that are characterized by some form of \emph{symmetry}. The interest in such system classes has a long history, often motivated by electrical and mechanical synthesis problems such as those occurring in the design of analog computers. Recently, there is renewed interest due to various reasons. In large scale engineering systems there is a clear need to exploit extra structure for analysis and control purposes. This may lead to improved scalability and to controller structures which are simpler, more robust, and have a clear physical interpretation; see e.g. \cite{pates, SIAMgraphs, pass}. Furthermore, the emerging area of \emph{neuromorphic computation} strongly motivates the identification of suitable classes of system components to be used for the design of neuro-computing devices, neuro-sensors, and neuro-controllers. 

While classical electrical network synthesis was for a large part concerned with \emph{linear} system components, for many applications this emphasis on linear systems does not suffice. On the other hand, synthesis with \emph{arbitrary} nonlinear system components seems problematic. Hence an insightful theory of linear system components with well-defined symmetry structures could pave the way to the identification of proper nonlinear generalizations as well. Therefore, in the present paper we will concentrate on \emph{linear} systems, and leave their nonlinear extensions for future work. For some already existing nonlinear explorations we refer to e.g. \cite{sepulchre24,SIAMreciprocal} and the references quoted therein.

\smallskip

{\bf Notation} Throughout this paper we consider standard finite-dimen-sional linear time-invariant (LTI) systems $\Sigma=(A,B,C,D)$ given in the ubiquitous input-state-output form
\bq
\label{system}
\Sigma: \quad 
\begin{array}{rcl}
\dot{x} & = & Ax + Bu,\qquad x \in \X=\mR^n, \; u \in \U=\mR^m
\\[2mm]
y & = & Cx +Du, \qquad y \in \Y=\mR^m
\end{array}
\eq
with equal number of inputs and outputs.
The \emph{impulse response matrix} of $\Sigma$ is denoted by
\bq
W(t-\tau) := Ce^{A(t-\tau)}B + D \delta (t-\tau),
\eq
and defines the \emph{Volterra integral operator}
\bq
y(t) = \int_{-\infty}^t W(t- \tau)u(\tau) d \tau + Du(t), \quad t \in \mR,
\eq
from input functions $u$ to output functions $y$ (with $\Sigma$ initially at rest, that is, $x(-\infty)=0$). Furthermore, the \emph{transfer matrix} of $\Sigma$ is given by
\bq
K(s):= C(Is-A)^{-1}B +D.
\eq
We primarily consider two classes of systems with symmetry: namely \emph{reciprocal systems} (including relaxation systems) in Section 2, and \emph{input-output Hamiltonian systems} in Section 3. These two classes can be expected to be fundamental building blocks in any network theory of physical and computing systems. Section 4 brings in another form of symmetry, namely \emph{time reversibility}, and its relations to reciprocal and input-output Hamiltonian systems. Section 5 contains conclusions and further outlook. Required background on Lagrangian subspaces and Dirac structures is given in Appendix \ref{secA1}. Appendix \ref{secA2} contains further explorations on the kernel of the Hankel operator of a reciprocal system.

\section{Reciprocal systems}
\subsection{Basic definitions and properties}
First systems under consideration are \emph{reciprocal systems}, including the subclass of \emph{relaxation systems}.

Recall that a \emph{signature matrix} $\sigma$ is a diagonal matrix with elements $+1$ and $-1$ on the diagonal. Hence $\sigma^2=I$, and thus the linear mapping corresponding to $\sigma$ is an \emph{involution}. Conversely, any involution $\sigma: \F \to \F$ for some finite-dimensional linear space $\F$ can be transformed into a signature matrix by a similarity transformation.
\begin{definition}
A system $\Sigma$ is \emph{reciprocal} with respect to a signature matrix $\sigma$ if its transfer matrix $K(s)$ satisfies $\sigma K(s)=K^\top (s) \sigma$, or equivalently its impulse response matrix $W(t-\tau)$ satisfies $\sigma W(t-\tau)= W^\top(t-\tau) \sigma$.
\end{definition}
\begin{remark}
{\rm
Reciprocity amounts to the following symmetry of the input-output behavior of $\Sigma$. For simplicity, let $\sigma=I_m$. Consider the system initialized at $x(-\infty)=0$. Apply an input function on $(-\infty,\infty)$ with all components equal to zero, except for the $i$-the component $u_i(\cdot)=f(\cdot)$, and observe the $j$-th output component $y_j(\cdot)$. Next apply an input function with all components equal to zero, except for the $j$-the component $u_j(\cdot)$, where $u_j(\cdot)=f(\cdot)$ is the same scalar function as before. Then the observed $i$-th output component $y_i(\cdot)$ is \emph{equal} to $y_j(\cdot)$, and this symmetry property should hold for all $i,j=1, \cdots,m$. An equivalent formulation is given in classical electrical network theory \cite{newcomb} as follows. Take any two input-output function pairs $(u_a,y_a), (u_b,y_b)$ of $\Sigma$. Then $\Sigma$ is reciprocal if and only if
\bq
u_a \star y_b = u_b \star y_a,
\eq
where $\star$ denotes convolution on $(-\infty,\infty)$.
}
\end{remark}
In \cite{willems72} it was shown that a system $\Sigma$ with minimal state space realization $(A,B,C,D)$ is reciprocal with respect to the signature matrix $\sigma$ if and only if there exists an invertible matrix $G=G^\top$ (which is unique) such that
\bq
\label{G12}
A^\top G + GA = 0, \; B^\top G= \sigma C, \; \sigma D= D^\top \sigma.
\eq
The matrix $G$ defines a, possibly indefinite, inner product on the state space $\X$; in the rest of the paper referred to as a \emph{pseudo-inner product}. Reciprocal systems arise abundantly in applications \cite{anderson,willems72}. For example, in electrical network theory reciprocal systems that are also \emph{passive} (see Section \ref{subsec:rec}) correspond to electrical networks containing capacitors, inductors, resistors, as well transformers (but \emph{no} gyrators); cf. \cite{anderson}, \cite{willems72}, \cite{hughes}.

By defining the symmetric matrix $P=-GA$, any reciprocal system can be written as a \emph{pseudo-gradient system}
\bq
\label{gradient}
\begin{array}{rcl}
G \dot{x} & = & -Px + C^\top \sigma u
\\[2mm]
y & = & Cx +Du, \qquad \sigma D= D^\top \sigma,
\end{array}
\eq
with potential function $\frac{1}{2}x^\top P x$; see \cite{SIAMreciprocal} and the references quoted therein.
Equivalently, the system $\Sigma$ with inputs $u$ and outputs $\sigma y$ is reciprocal whenever it has the same input-output behavior as its \emph{dual system} 
defined as
\bq
\label{dual}
\Sigma^d: \quad \begin{array}{rcl}
\dot{z} & = & A^\top z + C^\top  u^d
\\[2mm]
y^d & = & B^\top z +  D^\top  u^d
\end{array}
\eq
with inputs $u^d=\sigma u$ and outputs $y^d$. Indeed, the unique state space isomorphism between $\Sigma$ and $\Sigma^d$ is given as $z=Gx$. 
\begin{remark}
{\rm Note furthermore that the time-reversed dual state $\hat{z}(t)=z(-t)$ satisfies
\bq
\dot{\hat{z}}(t) = - A^\top \hat{z}(t) - C^\top \hat{u}^d(t),
\eq
with $\hat{u}^d(t)=u^d(-t)$. These are the state space equations of the \emph{adjoint system} to be defined later on in \eqref{adjoint}.}
\end{remark}

One verifies, \cite{willems72,SIAMreciprocal}, that along the solutions of any reciprocal system
\bq
\frac{d}{dt} x^\top (-t)Gx(t) = - u^\top (-t) \sigma y(t) + \sigma y^\top(-t) u(t),
\eq
Hence, by considering solutions with $u(t)=0, t\geq 0$, and integrating over $[0,\infty)$, one obtains
$x^\top (-\infty)Gx(\infty) - x^\top(0)Gx(0) = - \int_0^\infty u^\top(-t) \sigma y(t) dt$. Thus if either $x(-\infty)$ or $x(\infty)$ equals zero, then $G$ is determined by
\bq
\label{G2}
x^\top(0)Gx(0) = \int_0^\infty u^\top (-t) \sigma y(t) dt
\eq
for any $x(0) \in \X$. In particular, if the system is \emph{controllable} then any $x(0)$ can be reached from $x(-\infty)=0$, implying that $G$ is fully determined by the input-output behavior of the system.

Using \eqref{G12} the impulse response matrix $W(t-\tau)= Ce^{A(t-\tau)}B + D \delta (t-\tau)$ of a reciprocal system $\Sigma$ can be rewritten as
\bq
\label{W}
\begin{array}{rcl}
\sigma W(t-\tau) &= &B^\top G e^{At} e^{-A\tau}B + \sigma D \delta (t-\tau) \\[2mm]
&= &B^\top e^{A^\top t} G e^{-A\tau}B + D^\top \sigma \delta (t-\tau).
\end{array}
\eq
This motivates to consider reciprocal systems from the \emph{Hankel} point of view as follows.
For any past input function $u_p: (-\infty,0] \to \mR^m$ define the \emph{time-reversed} input $\hat{u}_p: [0,\infty) \to \mR^m$ as $\hat{u}(t):=u_p(-t), t \in [0,\infty)$.
Then the Hankel operator $\cH : L_2 ([0,\infty),\mR^m) \to L_2 ([0,\infty),\mR^m)$
is given as the map $\hat{u}_p \mapsto y_f$ given as
\bq
y_f(t) =  \sigma B^\top e^{A^\top t} G \int_0^\infty e^{A\tau}B\hat{u}_p(\tau) d\tau, \; t \in [0,\infty),
\eq
where $y_f(t), t \in [0,\infty)$, is the future output of $\Sigma$ resulting from $x(-\infty)=0$ and the $L_2$ input $u(t)=u_p(t), t \in (-\infty,0], u(t)=0, t \in [0,\infty)$. 
In order that this map is well-defined (i.e., $y_f(\cdot) \in L_2([0,\infty), \mR^m)$) we impose in the rest of this section the following assumption.
\begin{assumption}
$A$ is Hurwitz.
\end{assumption}
This implies $x(\infty)=0$, and hence by \eqref{G2} the matrix $G$ is uniquely determined by the input-output behavior (even if the system $\Sigma$ is not minimal). For simplicity of notation $L_2([0,\infty), \mR^m)$ will be throughout abbreviated to $L_2[0,\infty)$.

It follows from \eqref{W} that the kernel of $\sigma \cH$ is given by
\bq
\label{symkernel}
B^\top e^{A^\top t} G e^{A \tau} B,
\eq
and thus is \emph{symmetric}; see Appendix \ref{secA2} for its orthonormal decomposition. Hence $\cH$ satisfies $\sigma \cH= \cH^* \sigma,$
where ${}^*$ denotes \emph{adjoint} of an operator. Said otherwise, the operator $\sigma \cH$ is \emph{self-adjoint}, and thus there is an associated \emph{quadratic functional} $\mathfrak{H}: L_2[0,\infty) \to \mR$, defined as
\bq
\label{quad}
\begin{array}{l}
\mathfrak{H}(\hat{u}_p)=\frac{1}{2} \! < \hat{u}_p , \sigma \cH (\hat{u}_p) > = \\[2mm] \frac{1}{2} \! \left(\int_0^\infty e^{At}Bu_p(-t) dt \right)^\top \! G \int_0^\infty e^{A\tau}Bu_p(-\tau) d\tau.
\end{array}
\eq
Indeed, see \cite{chaffey23} for details, the \emph{variational derivative} of $\mathfrak{H}$ equals $\sigma \cH$, i.e.,
\bq
\frac{\delta \mathfrak{H}}{\delta \hat{u}_p}(\hat{u}_p) = \sigma \cH(\hat{u}_p).
\eq
Note as well that $\mathfrak{H}(\hat{u}_p)= \frac{1}{2}x^\top G x$ with $x=x(0)$, and thus the quadratic functional $\mathfrak{H}$ factorizes over the state space $\X$. In this sense, $\mathfrak{H}$ is a \emph{memory functional}. 
Furthermore, instead of splitting $(-\infty,\infty)$ into the past $(-\infty,0)$ and the future $[0,\infty)$ at present time $0$, by time-invariance one can also split $(-\infty,\infty)=(-\infty,t) \cup [t,\infty)$ for any time $t\geq 0$. In this way $\mathfrak{H}$ becomes equal to $\frac{1}{2}x(t)^\top G x(t)$; see \cite{chaffey23}.

\subsection{Geometric formulation}
Reciprocity can be interpreted from a coordinate-free and geometric point of view as follows.
Given a signature matrix $\sigma$ consider on the product space $L_2[0,\infty) \times L_2[0,\infty)$ with elements $(f,e)$ 
the weak \emph{symplectic form} \cite{abraham}
\bq
\label{weaksym}
\ll (f^a, e^a), (f^b, e^b) \gg:= \int_0^\infty f^b(t)^\top e^a(t) - f^a(t)^\top e^b(t)   dt.
\eq
A subspace $\cL \subset L_2(0,\infty) \times L_2(0,\infty)$ is \emph{Lagrangian} if and only if $\cL= \cL^{\pperp}$, where ${}^{\pperp}$ means orthogonal companion with respect to the symplectic form $\ll \cdot,\cdot \gg$; see Appendix \ref{secA1}. A \emph{generating functional} for a Lagrangian subspace $\cL$ is a functional $\mathfrak{V}: L_2[0,\infty) \to \mR$ such that
\bq
\label{genfun}
\cL = \mbox{ graph } \frac{\delta \mathfrak{V}}{\delta f}(f),
\eq
where $\frac{\delta \mathfrak{V}}{\delta f}$ is the variational derivative of $\mathfrak{V}$. Any subspace $\cL$ of the form \eqref{genfun} is a Lagrangian subspace, and furthermore for any Lagrangian subspace $\cL$ that can be parametrized by $f$ there exists a generating functional $\mathfrak{V}$. More generally, let $\cL$ be a Lagrangian subspace and consider a splitting $f=(f^1,f^2), e=(e^1,e^2)$ such that $\cL$ is parametrized by $u:=(f^1,e^2)$ (such a splitting always exists). Then
\bq
\cL = \{ (f^1,f^2,e^1,e^2) \mid y= \sigma \frac{\delta \mathfrak{V}}{\delta u}(u), \mbox{with } u=(f^1,e^2), y:=(e^1,f^2)\}
\eq
with $\sigma$ is the signature matrix corresponding to the splitting, i.e. $\dim f^1$ is the number of elements $+1$ and $\dim f^2$ is the number of elements $-1$ on the diagonal of $\sigma$.
The following proposition is immediate.
\begin{proposition}
A system $\Sigma$ is reciprocal with respect to $\sigma$ if and only if
\bq
\begin{array}{l}
\cL := \{(\hat{u}_p, y_f):[0,\infty) \to \mR^m \times \mR^m \mid y_f(t), t \in [0,\infty), \mbox{ resulting}\\[2mm]
  \mbox{from } u(t)=u_p(t), t \in (-\infty,0], u(t)=0, t\in [0,\infty), x(-\infty)=0\}
\end{array}
\eq
is a Lagrangian subspace. Furthermore, the generating functional of $\cL$ is the functional $\mathfrak{H}$ defined in \eqref{quad}.
\end{proposition}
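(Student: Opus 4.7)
My plan is to identify $\cL$ as the graph of the Hankel operator $\cH$ and then reduce the Lagrangian property to self-adjointness of $\sigma\cH$, which the excerpt already ties to reciprocity through \eqref{W}--\eqref{symkernel}. Under the Hurwitz assumption the map $\hat u_p \mapsto y_f$ is well defined on $L_2[0,\infty)$ and coincides with $\cH$, so $\cL = \{(\hat u_p, \cH \hat u_p)\}$ is parametrized by $\hat u_p$. Using the general recipe from the paragraph containing \eqref{genfun}, with the splitting of the $(f,e)$-pair induced by $\sigma$ (input components play the role of $f$-variables where $\sigma_{ii}=+1$ and of $e$-variables where $\sigma_{ii}=-1$, outputs in the complementary slots), a short sign bookkeeping shows that
\bq
\ll (\hat u_p^a, y_f^a), (\hat u_p^b, y_f^b) \gg = \int_0^\infty \bigl[(\hat u_p^b)^\top \sigma (\cH \hat u_p^a) - (\hat u_p^a)^\top \sigma (\cH \hat u_p^b)\bigr] dt,
\eq
so vanishing of the symplectic form on all pairs of elements of $\cL$ is equivalent to $\sigma\cH = (\sigma\cH)^* = \cH^*\sigma$, i.e.\ to self-adjointness of $\sigma\cH$.

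Next I would appeal to the already displayed expression for the kernel of $\sigma\cH$ in \eqref{symkernel}, namely $B^\top e^{A^\top t}Ge^{A\tau}B$. This kernel is symmetric in $(t,\tau)$ if and only if $G=G^\top$, and, by the characterization \eqref{G12}, this is exactly the content of reciprocity; the converse direction can be read directly off the symmetric kernel, or, equivalently, via the uniqueness of $G$ from \eqref{G2}. This closes the equivalence between the Lagrangian property of $\cL$ and reciprocity of $\Sigma$.

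For the generating-functional claim, reciprocity makes $\sigma\cH$ self-adjoint, so the quadratic functional $\mathfrak H$ of \eqref{quad} has variational derivative $\frac{\delta\mathfrak H}{\delta \hat u_p}(\hat u_p) = \sigma\cH \hat u_p$, as already noted in the excerpt. Then $y_f = \cH \hat u_p = \sigma\bigl(\sigma\cH \hat u_p\bigr) = \sigma \frac{\delta \mathfrak H}{\delta \hat u_p}(\hat u_p)$, which is exactly the $\sigma$-decorated generating-functional form exhibited after \eqref{genfun}, so $\mathfrak H$ generates $\cL$. The main obstacle I foresee is the bookkeeping in the first step: placing the signature $\sigma$ correctly inside the integrand of \eqref{weaksym} so that the Lagrangian condition on the graph of $\cH$ reads as self-adjointness of $\sigma\cH$ rather than of $\cH$ itself. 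Once that factor is settled, every remaining step is a direct appeal to formulas already in the excerpt.
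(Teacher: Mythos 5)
Your argument is correct and is exactly the route the paper intends when it calls the proposition ``immediate'': identify $\cL$ with the graph of the Hankel operator, observe that vanishing of the form \eqref{weaksym} on this graph (under the $\sigma$-induced splitting of $(f,e)$) is precisely self-adjointness of $\sigma\cH$, connect that to the symmetric kernel \eqref{symkernel} and hence to \eqref{G12}, and obtain the generating functional from $\frac{\delta \mathfrak{H}}{\delta \hat{u}_p}=\sigma\cH$. Two caveats, which the paper itself glosses over: for the converse direction you should argue from the raw kernel $\sigma C e^{A(t+\tau)}B$ of $\sigma\cH$ (whose symmetry is literally $\sigma W=W^\top\sigma$ for the strictly proper part, yielding $G$ by minimality) rather than from \eqref{symkernel}, whose form already presupposes that $G$ exists; and the Hankel operator does not see the feedthrough $D$, so the condition $\sigma D=D^\top\sigma$ in the definition of reciprocity is not actually enforced by the Lagrangian property of $\cL$.
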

%

\subsection{Reciprocity and passivity; relaxation systems}
\label{subsec:rec}
A key contribution of the seminal paper \cite{willems72} is the combination of reciprocity with \emph{passivity}. Recall that a system $\Sigma$ is passive if and only if there exists a matrix $Q=Q^\top \geq 0$ satisfying the (passivity) \emph{dissipation inequality} $\frac{d}{dt} \frac{1}{2} x^\top Q x \leq y^\top u$, or equivalently the Linear Matrix Inequality (LMI)
\bq
\label{paslinear}
\bma Q & 0 \\[2mm] 0 & I \ema \bma -A & -B \\[2mm] C & D \ema +
\bma -A^\top & C^\top \\[2mm] -B^\top & D^\top \ema  \bma Q & 0 \\[2mm] 0 & I \ema \geq 0.
\eq
The quadratic function $\frac{1}{2} x^\top Q x \geq 0$ is called a \emph{storage function} for the system $\Sigma$, and $Q$ is called a storage matrix. The system is called \emph{lossless} if $\frac{d}{dt} \frac{1}{2} x^\top Q x = y^\top u$, and thus \eqref{paslinear} is satisfied with equality. If $Q$ is indefinite, then the system is called \emph{cyclo-}passive, respectively \emph{cyclo-}lossless. The following result is well-known; see \cite{SIAMreciprocal} for a short proof.
\begin{proposition}
\label{prop:Q}
Suppose $Q=Q^\top$ is a solution to \eqref{paslinear}. Then $\ker Q$ is $A$-invariant and contained in $\ker C$. In particular, if the system is observable then necessarily $\ker Q=0$.
\end{proposition}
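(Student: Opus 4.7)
The plan is to exploit the elementary fact that a real symmetric positive semidefinite matrix $M$ satisfies $Mv = 0$ whenever $v^\top M v = 0$ (Cauchy--Schwarz applied to the semi-inner product induced by $M$). I would first carry out the matrix multiplications in \eqref{paslinear} to rewrite the LMI in the explicit block form
\[
M := \bma -(QA + A^\top Q) & C^\top - QB \\ C - B^\top Q & D + D^\top \ema \geq 0.
\]

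Next, I would pick an arbitrary $x \in \ker Q$ and test $M$ against the vector $v$ whose top block is $x$ and whose bottom block is $0$. Using $Qx = 0$, the quadratic form collapses to $v^\top M v = -2 x^\top A^\top Q x = 0$, and the PSD property then forces $Mv = 0$. Reading off the two block rows yields $(A^\top Q + QA)x = 0$ and $(C - B^\top Q)x = 0$; combining each of these with $Qx = 0$ gives $QAx = 0$ and $Cx = 0$, i.e.\ $Ax \in \ker Q$ and $x \in \ker C$. This delivers both claims of the first sentence in a single step.

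For the observability statement I would invoke the standard characterization of the unobservable subspace of the pair $(C,A)$ as the largest $A$-invariant subspace contained in $\ker C$. Since $\ker Q$ has just been shown to be such a subspace, it must lie inside the unobservable subspace, which collapses to $\{0\}$ under the observability hypothesis; hence $\ker Q = 0$.

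There is no serious obstacle; the whole argument takes only a few lines once the PSD annihilation principle is brought in. The only modest subtlety is the choice of test vector: the bottom block must be $0$ so that the positive $(D+D^\top)$ contribution and the off-diagonal block $C^\top - QB$ both drop out harmlessly, leaving a single cross-term that vanishes precisely because $Qx = 0$. Without that zero bottom block, the quadratic form would not necessarily vanish, and the implication $Mv=0$ would be unavailable.
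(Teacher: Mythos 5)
Your argument is correct and complete: the block form of the LMI, the choice of test vector $\bma x^\top & 0 \ema^\top$ for $x \in \ker Q$, the PSD annihilation step giving $QAx=0$ and $Cx=0$, and the identification of $\ker Q$ with a subspace of the unobservable subspace all go through (note that you never need $Q \geq 0$, so the claim holds for cyclo-passive systems as well). The paper itself does not print a proof but defers to \cite{SIAMreciprocal}; the short proof referenced there is essentially this same standard argument, so there is nothing further to reconcile.
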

The storage matrix of a (cyclo-)lossless system is \emph{unique}, but in general a (cyclo-)passive system admits \emph{many} $Q$ satisfying the LMI \eqref{paslinear}. On the other hand, if $Q$ is an invertible solution to \eqref{paslinear}, and additionally the system is \emph{reciprocal} with respect to $G$ and $\sigma$, then, by combining \eqref{paslinear} and \eqref{G12}, one verifies that also $Q' := GQ^{-1}G$ is a solution of \eqref{paslinear}. By an application of Brouwer's fixed point theorem \cite{willems76}, or more direct methods \cite{willems72}, it follows that there exist $Q$ satisfying \eqref{paslinear} \emph{and}
\bq
\label{complinear}
Q = GQ^{-1}G.
\eq
A solution $Q$ of \eqref{paslinear} satisfying additionally \eqref{complinear} is said to be \emph{compatible} with $G$. 
Compatible $Q$ define storage functions with a clear physical relevance; cf. \cite{willems72}, \cite{willems76}, \cite{SIAMreciprocal}. 

In general, there may be many compatible $Q$. However, if $G>0$ then, as shown in \cite{ifac2011}, there is a \emph{unique} compatible storage matrix, namely $Q=G$. This case amounts to the definition of a \emph{relaxation system}\footnote{The original paper \cite{willems72} starts with an alternative definition of relaxation systems in terms of complete monotonicity of the impulse response matrix, which then is shown to be equivalent to the definition given here.}.
\begin{definition}
A \emph{relaxation} system is a passive system that is reciprocal with respect to the identity matrix $\sigma=I$ and with $G>0$.
\end{definition}
Thus relaxation systems $(A,B,C,D)$ are reciprocal systems with inner product $G>0$ satisfying the passivity dissipation inequality
\bq
\label{rela}
\frac{d}{dt} \frac{1}{2} x^\top G x \leq y^\top u.
\eq
\begin{remark}
{\rm In fact, a reciprocal system with $G\geq 0$, $A$ Hurwitz, and $D=D^\top \geq 0,$ is automatically passive. This comes from the fact that in this case the symmetric matrix $GA$ satisfies $GA \geq 0$; see \cite{chaffey23} for a proof. Using this property the inequality \eqref{rela} is immediately verified.
}
\end{remark}
Loosely speaking, relaxation systems are passive systems with only one type of energy (in the sense that no part of the energy is transformed into another), and therefore do not show "any hint of oscillatory behavior", cf. \cite{willems72}. For a broad range of physical examples of relaxation systems see \cite{willems72}, \cite{willems76}. 

\subsection{Port-Hamiltonian formulation of passive reciprocal systems}
The physical properties of passive reciprocal and relaxation systems become especially clear in their \emph{port-Hamiltonian} formulation \cite{jeltsema}; see also \cite{ifac2011}. 
Consider a system $\Sigma=(A,B,C,D)$ which is reciprocal, i.e., satisfying \eqref{G12}. Write the system into its pseudo-gradient form \eqref{gradient}. Furthermore, let the system be passive, with invertible $Q$ satisfying \eqref{paslinear} and $Q=GQ^{-1}G$. For simplicity of exposition let $\sigma=I$; otherwise see \cite{SIAMreciprocal}. By Lemma 2.2 in \cite{ifac2011} there exist coordinates $x=(x_1,x_2)$ in which 
\begin{equation}\label{compatible1}
Q = \begin{bmatrix} Q_1 & 0 \\ 0 & Q_2 \end{bmatrix} \, , \quad G = \begin{bmatrix} Q_1 & 0 \\ 0 & - Q_2 \end{bmatrix}.
\end{equation}
In such coordinates the system takes the form, see \cite{ifac2011},
\begin{equation}\label{gradient2}
\begin{array}{rcl}
\begin{bmatrix} Q_1 & 0 \\ 0 & -Q_2 \end{bmatrix} \dot{x} & = & - Px +  \begin{bmatrix} C^\top_1 \\ 0 \end{bmatrix} u \\[4mm]
y & = & \begin{bmatrix} C_1 & 0 \end{bmatrix} x +Du, \quad D=D^\top \geq 0
\end{array} 
\end{equation}
Partition the symmetric matrix $P$ correspondingly as $P = \begin{bmatrix} P_1 & P_c \\ P_c^\top & P_2 \end{bmatrix}$. Then passivity implies $P_1=P_1^\top \geq 0,  P_2 = P_2^\top \leq 0$. Multiplying the second part (corresponding to $x_2$) of the differential equations in (\ref{gradient2}) on both sides with a minus sign, one obtains the equivalent system description
\begin{equation}\label{gradient3}
\begin{array}{rcl}
\begin{bmatrix} Q_1 & 0 \\ 0 & Q_2 \end{bmatrix} \dot{x} & = & - 
\begin{bmatrix} P_1 & P_c \\ -P_c^\top & -P_2 \end{bmatrix}x +  \begin{bmatrix} C^\top_1 \\ 0 \end{bmatrix} u \\[4mm]
y & = & \begin{bmatrix} C_1 & 0 \end{bmatrix} x +Du
\end{array} 
\end{equation}
with $P_1=P_1^\top \geq 0,  P_2 = P_2^\top \leq 0$.
Then in the new coordinates $z = (z_1,z_2)$, with $z_1=Q_1x_1$ and $z_2=Q_2x_2$, (\ref{gradient3}) takes the port-Hamiltonian form
\begin{equation}\label{portham1}
\begin{array}{rcl}
\dot{z} & = & \left( \begin{bmatrix} 0 & - P_c \\ P_c^\top & 0 \end{bmatrix} - \begin{bmatrix} P_1 & 0 \\ 0 & -P_2 \end{bmatrix} \right) \begin{bmatrix} Q_1^{-1} & 0 \\ 0 & Q_2^{-1}\end{bmatrix}z + 
\begin{bmatrix} C^\top_1 \\ 0 \end{bmatrix} u \\[6mm]
y & = & \begin{bmatrix} C_1 & 0 \end{bmatrix} \begin{bmatrix} Q_1^{-1} & 0 \\ 0 & Q_2^{-1}\end{bmatrix}z + Du,\quad D=D^\top \geq 0,
\end{array} 
\end{equation}
with Hamiltonian $ \frac{1}{2} z_1^\top Q_1^{-1}z_1 + \frac{1}{2}z_2^\top Q_2^{-1}z_2$. 

Thus the system is split into \emph{two energy domains}, corresponding to $z_1$ and $z_2$, which are only interconnected through the coupling matrix $P_c$. Physically, $z$ are the energy state variables, while $x=G^{-1}z$ are the co-energy state variables. For example, in an RLC electrical network with current sources, the components of $z_1$ are the \emph{charges} of the capacitors, and $z_2$ the \emph{flux linkages} of the inductors, while $x_1$ are the voltages across the capacitors and $x_2$ the currents through the inductors. Furthermore, $\frac{1}{2} x_1^\top Px_1$ is the content function of the combined current-controlled \emph{resistors}, and $\frac{1}{2} x_2^\top Px_2$ is the co-content of the voltage-controlled \emph{conductors}. Finally, $P_c$ is defined by the network topology (coupling capacitors to inductors, and thus electric energy to magnetic energy).

In the special case of a relaxation system $G=Q$, and the dynamics simplifies to
\bq
\label{recport}
\begin{array}{rcl}
\dot{z} &= &-PG^{-1}z + C^\top u \\[2mm]
y &=& CG^{-1}z + Du, \qquad D=D^\top \geq 0
\end{array}
\eq
with just one energy domain. 
For example, in an RL electrical network, the components of $z$ are flux linkages of the inductors and $x$ the currents through the inductors, while the potential function $\frac{1}{2} x^\top Px$ is the content of the current-controlled \emph{resistors}. 


In this electrical network context the role of the signature matrix $\sigma$ is also clear. Indeed, the vectors $f$ and $e$ as occurring in the definition of a Lagrangian subspace (see Appendix 1) correspond to currents $I$, respectively voltages $V$, at the external ports of the network. It is well known that an impedance (from $I$ to $V$) or an admittance (from $V$ to $I$) representation is not always possible. However, a \emph{hybrid} input-output representation, from part of the currents together with a complementary part of the voltages, to the rest of the currents and voltages, \emph{is} possible. In the case of an RLCT electrical network this defines a reciprocal system with respect to the signature matrix defined by the splitting; see e.g. \cite{anderson}.

\section{Input-output Hamiltonian systems}
In this section we discuss \emph{another} class of systems with symmetry structure, namely linear \emph{input-output Hamiltonian systems} as originating from \cite{brockett,vds}. In this case the symmetry is reflected directly in the properties of the \emph{Volterra operator} of the system, instead of the Hankel operator as in the previous reciprocal case.

\subsection{Definitions, basic properties and geometric formulation}
\begin{definition}
A system $\Sigma$ is an \emph{input-output (IO) Hamiltonian system} with respect to a signature matrix $\sigma$ if its transfer matrix $K(s)$ satisfies $\sigma K(s)=K^\top (-s) \sigma$, or equivalently its impulse response matrix $W(t -\tau)$ satisfies $\sigma W(t-\tau)= - W^\top(\tau-t) \sigma$.
\end{definition}
As shown in \cite{vds}, a minimal state space system realization $\Sigma=(A,B,C,D)$ of $K(s)$ is an IO Hamiltonian system with respect to $\sigma$ if and only if there exists an invertible $\Omega=-\Omega^\top$ (which is unique) such that
\bq
\label{IOham}
A^\top \Omega + \Omega A =0, \; B^\top \Omega= \sigma C, \; \sigma D = D^\top \sigma.
\eq
The matrix $\Omega$ is also known as a \emph{symplectic form} on the state space $\X$, and there exist (so-called \emph{canonical}) coordinates in which
\bq
\Omega= \bma 0 & -I \\I & 0 \ema.
\eq
(In particular, the dimension of $\X$ is even.)

Input-output Hamiltonian systems are an extension to classical Hamiltonian systems; see e.g. \cite{brockett, vds}. They are related to the broad class of \emph{port-Hamiltonian systems} \cite{jeltsema, pass} as follows. Consider an IO Hamiltonian system $\Sigma$ with $D=0$, and for clarity of exposition let $\sigma=I$. Define the matrix $Q=\Omega A$, which by \eqref{IOham} and $\Omega=-\Omega^\top$ is symmetric. Then rewrite the equations of the IO Hamiltonian system as
\bq
\begin{array}{rcl}
\dot{x} & = & J Q x - J C^\top u \\[2mm]
y & = & Cx ,
\end{array}
\eq
with skew-symmetric matrix $J:=\Omega^{-1}$ (the Poisson structure matrix). Replacing the output $y$ by its \emph{time-derivative} $z=\dot{y}=CAx + CBu= C\Omega Qx - C\Omega C^\top u$ then leads to
\bq
\begin{array}{rcl}
\dot{x} & = & JQ x - JC^\top u \\[2mm]
z & = & CJ Qx - CJC^\top u
\end{array}
\eq
This is a \emph{port-Hamiltonian system} with Hamiltonian (stored energy) $\frac{1}{2}x^\top Q x$, satisfying the dissipation \emph{equality}
\bq
\frac{d}{dt} \frac{1}{2}x^\top Q x = x^\top Q \left(JQ x - JC^\top u \right) = z^\top u,
\eq
since $x^\top QJQx=0, u^\top CJC^\top u=0$.
\begin{example}
\label{ex:mass}
Consider a point mass with mass $m=1$, external force $u$, and output equal to the position of the mass. That is
\bq
A=\bma 0 & 1 \\ 0 & 0 \ema, \; B=\bma 0 \\ 1  \ema, \;  C=\bma 1 &  0 \ema, \; \Omega = \bma 0 & -1 \\ 1 & 0 \ema
\eq
This is an IO Hamiltonian system, with impulse response
\bq
W(t,\tau) =Ce^{At}\cdot e^{-A\tau}B = \bma -t & 1\ema \bma 0 & -1 \\ 1 & 0 \ema \bma -\tau \\[2mm] 1 \ema = t-\tau
\eq
\end{example}
Thus input-output Hamiltonian systems satisfy the dissipation \emph{equality} with respect to the supply rate $u^\top z= u^\top \dot{y}$. Hence they are \emph{energy-conserving}. In \cite{nega} it has been shown how by extending input-output Hamiltonian systems to systems with energy dissipation they become equivalent to \emph{negative imaginary systems} \cite{petersen,lanzon} and \emph{counter-clockwise input-output systems} \cite{angeli}.

Being IO Hamiltonian can be also expressed by saying that the minimal state space system $\Sigma =(A,B,C,D)$, with inputs $u$ and outputs $\sigma y$, has the same input-output behavior as its \emph{adjoint system} 
\bq
\label{adjoint}
\Sigma^a: \quad \begin{array}{rcl}
\dot{p} & = & - A^\top p - C^\top  u^a
\\[2mm]
y^a & = & B^\top p +  D^\top  u^a
\end{array}
\eq
with inputs $u^a=\sigma u$ and outputs $y^a$. In fact, the unique state space isomorphism between $\Sigma$ and $\Sigma^a$ is given as $p=\Omega x$. In this sense IO Hamiltonian systems are \emph{self-adjoint}; see \cite{crouchvds}.

Recall that the adjoint system $\Sigma^a$ is characterized by the property
\bq
\frac{d}{dt} p^\top (t) x(t) = \left(y^a(t)\right)^\top u(t) - \left(u^a(t) \right)^\top y(t).
\eq
Using $p=\Omega x$ and $\Omega^\top=-\Omega$, this leads to the following defining identity of IO Hamiltonian systems
\bq
\frac{d}{dt} x_2^\top (t) \Omega x_1(t) = u_2^\top (t) y_1(t) - y_2^\top (t) u_1(t),
\eq
for all solution triples $(x_i,u_i,y_i), i=1,2$.

\subsection{Volterra operators of input-output Hamiltonian systems}
Equations \eqref{IOham} mean that the impulse response matrix of an IO Hamiltonian system can be rewritten as
\bq
\begin{array}{rcl}
\sigma W(t-\tau) &= & B^\top J e^{At} e^{-A\tau}B + \sigma D \delta (t-\tau) \\[2mm]
&=& B^\top e^{-A^\top t} J e^{-A\tau}B + D^\top \sigma \delta (t-\tau) .
\end{array}
\eq
For simplicity of exposition let us take throughout the rest of this section $D=0$ and $\sigma=I$. Then the Volterra integral operator is the map
\bq
\V: L_2(-\infty,\infty) \to L_2(-\infty,\infty), \quad u(\cdot) \mapsto y(\cdot) ,
\eq
given by
\bq
y(t)= B^\top e^{-A^\top t} \Omega \int_{-\infty}^t e^{-A\tau}Bu(\tau) d\tau.
\eq
In order that $\V$ is well-defined we will \emph{restrict} the domain of $\V$ to the set of input functions $u:(-\infty,\infty) \to \mR^m$ such that (1) $u$ has compact support, (2) the corresponding output $y$ has compact support. This second condition is equivalent to existence of $\alpha, \beta$ such that
\bq
\label{compact}
\int_\alpha^\beta e^{-A\tau}Bu(\tau) d\tau =0
\eq
(which ensures that the support of $y$ is within $(\alpha,\beta)$).

We see three major differences between the IO Hamiltonian case and the previous, reciprocal, case: (1) Volterra instead of Hankel, (2) $A$ does not need to be Hurwitz, (3) because of skew-symmetry of $\Omega$ the kernel
\bq
\label{kernel}
B^\top e^{-A^\top t} \Omega  e^{-A\tau}B, \; \tau < t, \qquad 0  \mbox{ whenever } \tau > t,
\eq
\emph{appears} to be skew-symmetric for $t > \tau$, instead of the symmetry that we encountered in the reciprocal case. However, as will become clear, the Volterra operator is \emph{not} skew-symmetric, but in fact \emph{symmetric}; precisely because of the compact support assumptions on its domain.

We start from the following proposition proven in \cite{vdslyon}, specializing a main result of \cite{crouchvds} to the linear case. 
\begin{proposition}
\label{propIOham}
Consider a system $\Sigma=(A,B,C,D)$ with equally dimensioned inputs and outputs $u,y$ in $\mR^m$. Consider a signal pair $u^a,y^a$ on $(-\infty,\infty)$ with compact support. Then
\bq
\int_\infty^\infty \left(u^a(t)\right)^\top y(t) dt = \int_\infty^\infty \left(y^a(t)\right)^\top u(t) dt
\eq
for all input-output trajectories $(u,y)$ of the given system $(A,B,C,D)$ with compact support, if and only if $(u^a,y^a)$ is an input-output trajectory of the \emph{adjoint} system $(-A^\top,-C^\top,B^\top,D^\top)$.
\end{proposition}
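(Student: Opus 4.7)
The proof plan rests on the instantaneous identity
\[
\frac{d}{dt}\bigl(p(t)^\top x(t)\bigr) = y^a(t)^\top u(t) - u^a(t)^\top y(t),
\]
valid whenever $x$ realizes the trajectory $(u,y)$ of $\Sigma$ and $p$ realizes $(u^a,y^a)$ for the adjoint system $(-A^\top,-C^\top,B^\top,D^\top)$. It is obtained by direct substitution of both sets of dynamics, the cross $D$-terms cancelling since $(u^a)^\top D u = (D^\top u^a)^\top u$. Both implications reduce to integrating this identity over $\mR$, so the real work is to handle the boundary terms and to pin down $p$ in the converse direction.

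For \emph{sufficiency}, given an adjoint trajectory $(u^a,y^a)$, my plan is to choose state realizations that kill the boundary terms. Since $u$ has compact support I would take $x$ with $x(-\infty)=0$; since $u^a$ has compact support I would realize $p$ by the backward-in-time formula $p(s) := \int_s^\infty e^{A^\top(t-s)} C^\top u^a(t)\,dt$, which satisfies $\dot p = -A^\top p - C^\top u^a$ and vanishes for $s$ past the support of $u^a$. Any two adjoint-state realizations differ by a solution of $\dot q = -A^\top q$ with $B^\top q = 0$, leaving $y^a = B^\top p + D^\top u^a$ invariant, so this specific $p$ is consistent with the given $(u^a,y^a)$. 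Both boundary terms $p(t)^\top x(t)\big|_{\pm\infty}$ then vanish, and integrating the identity yields the claimed equality.

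For \emph{necessity} my plan is to reverse-engineer $p$ from the assumed integral identity. Substituting $y(t) = \int_{-\infty}^t Ce^{A(t-s)}Bu(s)\,ds + Du(t)$ (for the realization with $x(-\infty)=0$) into the identity and swapping the order of integration reduces it to $\int \phi(s)^\top u(s)\,ds = 0$, where
\[
\phi(s) := y^a(s) - D^\top u^a(s) - \int_s^\infty B^\top e^{A^\top(t-s)} C^\top u^a(t)\,dt,
\]
required for every compactly supported $u$ whose resulting state $x(T)$ (for $T$ past the support of $u$) lies in the unobservable subspace $\N$ of $(C,A)$. This is a finite-codimensional linear constraint on $u$, so a standard annihilator / Lagrange-multiplier argument forces $\phi(s) = B^\top e^{-A^\top s}\xi$ for some $\xi \in \mR^n$. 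Setting $p(s) := e^{-A^\top s}\xi + \int_s^\infty e^{A^\top(t-s)} C^\top u^a(t)\,dt$, one checks directly that $\dot p = -A^\top p - C^\top u^a$ and $y^a = B^\top p + D^\top u^a$, exhibiting $(u^a,y^a)$ as a trajectory of the adjoint system.

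The hard part is this necessity step, specifically the precise description of the test class of $(u,y)$ with both signals compactly supported when $\Sigma$ is not observable: then $x(T)$ is only forced into $\N$ rather than to zero, so the annihilator must be computed on the observable quotient $\X/\N$ and the multiplier $\xi$ lifted back to $\mR^n$ using the $A$-invariance of $\N$. Controllability-type degeneracies on the adjoint side are innocuous because they merely enlarge the set of admissible $p$ realizing a given $(u^a,y^a)$.
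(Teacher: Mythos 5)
The paper does not prove this proposition; it is quoted from the reference [vdslyon] (van der Schaft, 1991), so there is no in-paper proof to compare against. Your argument is the standard one and is sound: the differential identity $\frac{d}{dt}\,p^\top(t)x(t) = (y^a(t))^\top u(t) - (u^a(t))^\top y(t)$ you integrate is exactly the characterizing property of the adjoint system recorded in Section~3.1 of the paper, the compact supports of $u^a$ (for $p$) and $u$ (for $x$) do kill the boundary terms, and your annihilator argument for necessity --- factoring the functional $u \mapsto \int \phi^\top u$ through the finite-dimensional map $u \mapsto x(T) \bmod \mathcal{N}$ to force $\phi(s) = B^\top e^{-A^\top s}\xi$, then exhibiting $p$ explicitly --- is the right mechanism, including your correct identification of the unobservable subspace as the only subtlety when $\Sigma$ is not observable.
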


This proposition immediately leads to the following characterization of linear IO Hamiltonian systems; see \cite{crouchvds} for further background and extensions to the nonlinear case.
\begin{proposition}
A minimal state space system $\Sigma=(A,B,C,D)$ is IO Hamiltonian if and only if the input-output behavior of compact support of $(A,B,C,D)$ is the same as that of its adjoint system $(-A^\top,-C^\top,B^\top,D^\top)$.
\end{proposition}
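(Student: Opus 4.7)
The plan is to combine Proposition \ref{propIOham} --- which characterizes compactly-supported I/O pairs of the adjoint $\Sigma^a$ by a bilinear pairing identity on CS I/O pairs of $\Sigma$ --- with the transfer-function characterization \eqref{IOham} of IO Hamiltonian systems. Equality of the two behaviors will reduce to a symmetry of the Volterra operator of $\Sigma$, which a Fourier transform will convert into the identity $K(s) = K^\top(-s)$. Throughout the sketch I take $\sigma = I$ following the standing convention of the subsection.

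The $(\Rightarrow)$ direction is immediate from a state-space isomorphism. If $\Sigma$ satisfies \eqref{IOham} with skew-symmetric invertible $\Omega$, the substitution $p := \Omega x$ gives $\dot p = \Omega(Ax+Bu) = -A^\top p - C^\top u$ (using $A^\top \Omega + \Omega A = 0$ and $\Omega B = -C^\top$), and $B^\top p + D^\top u = Cx + Du = y$ (using $B^\top \Omega = C$ and $D = D^\top$). Thus the trajectories of $\Sigma$ and $\Sigma^a$ are in $(u,y)$-preserving bijection, so their full and hence also their compact-support I/O behaviors coincide.

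For $(\Leftarrow)$, suppose the CS I/O behaviors of $\Sigma$ and $\Sigma^a$ coincide. Then every CS I/O pair $(u^a, y^a)$ of $\Sigma$ is also an I/O pair of $\Sigma^a$, so Proposition \ref{propIOham} applied to $\Sigma$ yields
\[
\int_{-\infty}^\infty (u^a)^\top y\, dt \;=\; \int_{-\infty}^\infty (y^a)^\top u\, dt
\]
for all CS I/O pairs $(u, y)$ of $\Sigma$. Writing $Y = KU$ in the Fourier domain, Parseval recasts this as
\[
\int U^a(-\omega)^\top \bigl[K(i\omega) - K^\top(-i\omega)\bigr] U(\omega)\, d\omega = 0
\]
for all admissible input transforms $U^a, U$. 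The main obstacle is the density step: the admissible CS inputs are constrained by \eqref{compact} (a codimension-$n$ condition), and one has to argue that their Fourier transforms form a set rich enough to force $K(i\omega) - K^\top(-i\omega) = 0$ pointwise. Minimality of $(A,B,C,D)$ is exactly what is needed --- it keeps \eqref{compact} an $n$-dimensional constraint, leaving an infinite-dimensional cone of admissible inputs with frequency content on every set of positive measure --- and once the pointwise identity is obtained, analytic continuation extends it to $K(s) = K^\top(-s)$, which is the IO Hamiltonian condition.
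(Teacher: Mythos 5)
Your forward direction is exactly the paper's: the isomorphism $p=\Omega x$ carrying trajectories of $\Sigma$ onto trajectories of the adjoint \eqref{adjoint} is precisely how the paper justifies that an IO Hamiltonian system ``has the same input-output behavior as its adjoint system,'' and your algebra ($\Omega A=-A^\top\Omega$, $\Omega B=-C^\top$, $B^\top\Omega=C$) is correct. For the converse you take a genuinely different and longer route than the paper intends. The paper's implicit argument is purely algebraic: the adjoint realization $(-A^\top,-C^\top,B^\top,D^\top)$ has transfer matrix $-B^\top(sI+A^\top)^{-1}C^\top+D^\top=K^\top(-s)$, so equality of the two behaviors gives $K(s)=K^\top(-s)$ directly, which is the \emph{definition} of IO Hamiltonian; the matrix $\Omega$ of \eqref{IOham} then comes from the uniqueness of the state-space isomorphism between two minimal realizations of the same transfer matrix (if $T$ conjugates $\Sigma$ to $\Sigma^a$ then so does $-T^\top$, whence $\Omega:=T=-T^\top$). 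Your route instead passes through Proposition \ref{propIOham}, symmetry of the bilinear form \eqref{symmetric}, and Parseval; this buys a coordinate-free, input-output-level argument (consonant with the Lagrangian-subspace viewpoint of the paper), but at the price of the density step you flag. That step is real but not fatal: note that both routes ultimately need the same fact, namely that equality of the compact-support behaviors determines the impulse response. The cleanest closure is operator-theoretic rather than pointwise-in-frequency: if $K(i\omega)-K^\top(-i\omega)\not\equiv 0$, the associated LTI convolution operator has infinite-dimensional range on compactly supported inputs, yet your hypothesis forces it to vanish on the finite-codimension subspace cut out by \eqref{compact}, hence to have finite rank --- a contradiction. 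Stated this way, minimality is not actually what makes the constraint \eqref{compact} finite-codimensional (that only needs $n<\infty$); minimality is what you need for the \emph{state-space} conclusion \eqref{IOham} with a unique invertible $\Omega$, so be careful not to conflate the two roles.
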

From a geometric point of view this means that the set of input-output trajectories $(u,y)$ of compact support of an input-output Hamiltonian system defines a Lagrangian subspace $\cL$ of the set of all signals (not necessarily input-output trajectories) $(u,y)$ of compact support. Equivalently, the Volterra $\V$ operator \emph{restricted} to the space of input functions of compact support and satisfying \eqref{compact} is \emph{self-adjoint}. 

Proposition \ref{propIOham} tells us that for an IO Hamiltonian system the bilinear form
\bq
\label{symmetric}
\left( (u_1,y_1), (u_2,y_2) \right):= \int_\infty^\infty u_1^\top(t)y_2(t) dt 
\eq
is \emph{symmetric} when restricted to the set of input-output trajectories of compact support. Thus although the kernel $B^\top e^{-A^\top t} J\Omega e^{-A\tau}B$ \emph{looks} skew-symmetric, the bilinear form \eqref{symmetric} is actually \emph{symmetric} in view of the constraints \eqref{compact}.
The generating functional of this Lagrangian subspace $\cL$ is the functional $\mathfrak{V}: L_2(-\infty,\infty) \to \mR$ defined as
\bq
\begin{array}{rcl}
\mathfrak{V}(u) &:= &\frac{1}{2} \int_{-\infty}^\infty u^\top (t) y(t) dt \\[2mm]
&= & \frac{1}{2} \int_{-\infty}^\infty u^\top (t) B^\top e^{-A^\top t} \Omega \left( \int_{-\infty}^t  e^{-A\tau}Bu(\tau)d\tau \right) dt.
\end{array}
\eq
Indeed, using symmetry of the bilinear form \eqref{symmetric}, it follows that the variational derivative of $V(u)$ is the output function $y$.

\subsection{Nonnegative input-output Hamiltonian systems}
A special case occurs if the generating functional $\mathfrak{V}$ is \emph{nonnegative}, i.e., 
\bq
\label{cycloIO}
\int_{-\infty}^{\infty} u^\top (t)y(t) dt \geq 0
\eq
for all input-output trajectories $(u(\cdot),y(\cdot))$ of compact support. 
Property \eqref{cycloIO} is the same as \emph{cyclo-passivity} of the IO Hamiltonian system. It means \cite{SIAMspectral} that there exists a (possibly indefinite) storage function $\frac{1}{2}x^\top W x$, with $W=W^\top$, satisfying
\bq
\label{cyclodiss}
\frac{d}{dt} \frac{1}{2}x^\top W x \leq u^\top y.
\eq
As shown in \cite{SIAMspectral}, if an invertible $W$ is a solution to the dissipation inequality \eqref{cyclodiss} then $\Omega W^{-1} \Omega$ is a solution as well. Since the set of $W$ satisfying \eqref{cyclodiss} is convex and compact, and the map $W \mapsto \Omega W^{-1} \Omega$ is continuous, it follows from Brouwer's fixed point theorem that there exist $W$ satisfying \eqref{cyclodiss} as well as
$W= \Omega W^{-1}\Omega$.
(The same argument was used for proving that in the reciprocal case there exists $Q$ satisfying \eqref{G12} and $Q= GQ^{-1}G$ \cite{willems76}.)
As discussed in \cite{SIAMspectral} this, in turn, implies that
\bq
(\Omega^{-1}W)^\top \Omega (\Omega^{-1}W) = -\Omega, \quad (\Omega^{-1}W)^2=I
\eq
Said otherwise, $\Omega^{-1}W$ is an anti-symplectomorphism and an involution. As follows from the results in \cite{meyer}, cf. \cite{SIAMspectral}, this means that there exist linear coordinates $x=(q,p)$ in which
\bq
\Omega= \bma 0 & -I \\ I & 0 \ema, \quad W = \bma 0 & I \\ I & 0 \ema,
\eq
i.e., the storage function is given by $p^\top q$. Note that this corresponds to the following appealing form of the dissipation inequality
\bq
\frac{d}{dt} \bma q^\top & p^\top \ema \bma  0 & I \\ I & 0 \ema \bma q \\[2mm] p \ema \leq \bma u^\top & y^\top \ema \bma  0 & I \\ I & 0 \ema \bma u \\ y \ema.
\eq
In such coordinates $(q,p)$ the system $\Sigma = (A,B,C)$, for simplicity with $D=0$, takes the form
\bq
\begin{array}{l}
A=\bma F & -P \\[2mm] -S & -F^\top \ema, \quad P=P^\top \geq 0, \quad S=R^\top \geq 0 \\[5mm]
B= \bma 0 \\[2mm] H^\top \ema, \quad C= \bma H & 0 \ema.
\end{array}
\eq
Next step is to consider the solution $X=X^\top$ of the Riccati equation
\bq
F^\top X + XF -XPX +S=0
\eq
(Controllability of $(A,B)$ implies controllability of $(F,P)$; cf. \cite{SIAMspectral}.) Application of the canonical transformation
\bq
\bma I  & 0 \\ - X & I \ema
\eq
to $(A,B,C)$ yields the transformed system $(\widetilde{A},\widetilde{B},\widetilde{C})$ given by
\bq
\widetilde{A}=\bma F-PX & -P \\ 0 & -F^\top \ema, \quad
\widetilde{B}= \bma 0 \\ H^\top \ema, \quad \widetilde{C}= \bma H & 0 \ema
\eq
It follows that the transfer matrix $K(s)=C(Is - A)^{-1}B$ factorizes as $K(s) = M(s)M^\top(-s)$, where
\bq
M(s) := H\left(sI - (F-PX)\right)^{-1} G \quad P=GG^\top
\eq
Thus nonnegativity corresponds to \emph{factorizability}. 
\begin{example}
{\rm
Consider the point-mass from Example \ref{ex:mass}, with impulse response $W(t,\tau) = t-\tau$. The constraints \eqref{compact} amount to
\bq
0=\int_\alpha^\beta u(\tau) d\tau, \quad 0 = \int_\alpha^\beta \tau u(\tau) d\tau.
\eq
This system is \emph{not} cyclo-dissipative with respect to the supply rate $uy$, since its transfer function $K(s)= \frac{1}{s^2}$ does not admit a factorization. In fact its generating functional $\mathfrak{V}$ is \emph{indefinite}.
}
\end{example}

\section{Time-reversibility, and its connections to reciprocal and IO Hamiltonian systems}

In this section we explore another type of symmetry, namely time-reversibility, in relation with the previous symmetry structures of reciprocity and being IO Hamiltonian. Passivity will again play a major role as well.

\subsection{Reciprocal systems and signed time-reversibility}
First, let us investigate the combination of reciprocity and passivity with the notion of \emph{signed time-reversibility}.
\begin{definition}
A system \eqref{system} is signed time-reversible if its transfer matrix $K(s)$ satisfies $K(s)=-K(-s)$.
\end{definition}
Equivalently, a system is signed time-reversible if whenever the pair $(u(t),y(t)), t \in \mR,$ is in the input-output behavior, then so is $(-u(-t),y(t)),$\\$ t \in \mR$. (Note the minus sign in front of the input.) It is shown in \cite{willemsrev} that a minimal system $\Sigma=(A,B,C,D)$ is signed time-reversible if and only if there exists an invertible $R: \X \to \X$ (the time-reversibility map), which is \emph{unique}, such that 
\bq
\label{stimereversible}
RA= -AR, \quad RB= B, \quad C=CR.
\eq
Furthermore, if $R$ satisfies these equations, then so does $R^{-1}$, and thus by uniqueness $R^2=I$; i.e., the signed time-reversibility map $R$ is an \emph{involution}. 

Now consider a minimal \emph{reciprocal} system (for simplicity with respect to $\sigma=I$), which is also signed time-reversible. Then its transfer matrix $K(s)$ satisfies $K(s)= -K(-s)= - K^\top (-s)$. This means that the system is also \emph{cyclo-lossless} with a storage function $\frac{1}{2}x^\top Q x$, where $Q=Q^\top$ is invertible, but not necessarily positive definite. 
Conversely, if a system is cyclo-lossless with storage matrix $Q$, then \eqref{paslinear} with equality yields
\bq
\label{conservative}
A^\top Q + QA  =0, \quad B^\top Q = C, \quad D + D^\top =0
\eq
This leads to the following proposition.
\begin{proposition}
\label{prop:4.2}
A minimal system that is reciprocal and signed time-reversible is also cyclo-lossless. Conversely, if the system $\Sigma=(A,B,C,D)$ is cyclo-lossless with invertible storage matrix $Q$, and reciprocal with pseudo-inner product $G$, then $D=0$ and $Q=GQ^{-1}G$, while the system is signed time-reversible with signed time-reversibility map $R:= Q^{-1}G$. 
\end{proposition}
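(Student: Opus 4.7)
My plan is to treat the two implications separately, using the transfer-function characterizations for the forward direction and the algebraic identities \eqref{G12}, \eqref{conservative}, \eqref{stimereversible} for the converse.

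\emph{Forward direction.} Fix $\sigma = I$. Reciprocity gives $K^\top(s) = K(s)$ and signed time-reversibility gives $K(-s) = -K(s)$; combining the two yields $K(s) + K^\top(-s) = 0$, the transfer-function form of cyclo-losslessness. Translating back via a minimal realization produces a symmetric $Q$ satisfying \eqref{conservative}. Invertibility of $Q$ follows from Proposition \ref{prop:Q}: $\ker Q$ is $A$-invariant and contained in $\ker C$, so observability (implied by minimality) forces $\ker Q = 0$.

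\emph{Converse direction.} First, combining $\sigma D = D^\top \sigma$ from \eqref{G12} with $D + D^\top = 0$ from \eqref{conservative} gives $\sigma D + D\sigma = 0$, which reduces to $D = 0$ for $\sigma = I$. Next, for the identity $Q = GQ^{-1}G$, I would define $Q' := GQ^{-1}G$ and verify that $Q'$ also satisfies \eqref{conservative}: the Lyapunov relation $A^\top Q' + Q' A = 0$ follows from $GA = -A^\top G$ and $Q^{-1}A^\top = -AQ^{-1}$ (derived from $A^\top Q + QA = 0$), and the output condition $B^\top Q' = C$ follows from $CQ^{-1} = B^\top$ (the transpose of $QB = C^\top$) together with $B^\top G = \sigma C$ and $\sigma^2 = I$. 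Uniqueness of the cyclo-lossless storage matrix then gives $Q = Q'$. Setting $R := Q^{-1}G$, the identity $Q = GQ^{-1}G$ immediately yields $R^2 = Q^{-1}GQ^{-1}G = I$, so $R$ is an involution, and the conditions $RB = B$ and $CR = C$ in \eqref{stimereversible} follow from $RB = Q^{-1}GB = Q^{-1}C^\top\sigma = B\sigma$ and $CR = B^\top G = \sigma C$, each reducing to the desired form when $\sigma = I$.

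\emph{Main obstacle.} The hard step is the anti-commutation $RA = -AR$. A direct manipulation using $GA = -A^\top G$ and $Q^{-1}A^\top = -AQ^{-1}$ yields $RA = -Q^{-1}A^\top G = AQ^{-1}G = AR$, which is commutation rather than the required anti-commutation. Bridging this gap is where the interaction between reciprocity and cyclo-losslessness becomes delicate; I would attempt it by decomposing the state space into the $\pm 1$ eigenspaces of the involution $R$ and combining the two Lyapunov equations with minimality to force an off-diagonal block structure of $A$ compatible with anti-commutation. This is the step where I expect the bulk of the technical effort to go, and where any hidden sign convention or additional hypothesis in the statement would need to be tracked carefully.
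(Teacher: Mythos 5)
Your route is the same as the paper's: forward direction via the transfer matrix identity $K(s)=-K(-s)=-K^\top(-s)$, converse by checking that $GQ^{-1}G$ satisfies \eqref{conservative}, invoking uniqueness of the cyclo-lossless storage matrix to conclude $Q=GQ^{-1}G$, and then reading off that $R=Q^{-1}G$ is an involution satisfying \eqref{stimereversible}. Those steps are all correct as you present them. The one place you stall, the anti-commutation $RA=-AR$, is a genuine gap in your writeup, but it is an artifact of taking the sign in \eqref{G12} at face value rather than a real difficulty. As printed, \eqref{G12} reads $A^\top G+GA=0$, but this is inconsistent with the rest of the paper: the matrix $P=-GA$ introduced immediately after \eqref{G12} is symmetric only if $A^\top G=GA$; the restatement of reciprocity in \eqref{2} is exactly $A^\top G=GA$; and the manipulation in \eqref{W} uses $Ge^{At}=e^{A^\top t}G$, which again requires $GA=A^\top G$. (The homogeneous Lyapunov equation $A^\top \Omega+\Omega A=0$ belongs to the IO Hamiltonian case \eqref{IOham}, where the adjoint rather than the dual system enters.) With the correct reciprocity relation $GA=A^\top G$ and the cyclo-lossless relation $Q^{-1}A^\top=-AQ^{-1}$ from \eqref{conservative}, the required identity is a one-line computation:
\[
RA \;=\; Q^{-1}GA \;=\; Q^{-1}A^\top G \;=\; -AQ^{-1}G \;=\; -AR .
\]
So the eigenspace decomposition you propose is unnecessary; moreover, as described it would be circular, since the $\pm 1$ eigenspace block structure you want to exploit is precisely what anti-commutation of $R$ with $A$ would establish. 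A minor additional remark: you fix $\sigma=I$ at the outset but then carry $\sigma$ through the verifications of $RB=B$ and $CR=C$; since \eqref{conservative} and \eqref{stimereversible} are stated without $\sigma$, it is cleanest to set $\sigma=I$ consistently throughout, as the paper does.
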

\begin{proof}
$D + D^\top =0$ and $D=D^\top$ obviously implies $D=0$. It is immediately checked that $GQ^{-1}G$ satisfies \eqref{conservative}. Hence by uniqueness of $Q$ it follows that $Q=GQ^{-1}G$. Thus $R=Q^{-1}G$ is an involution, and one verifies that $R$ satisfies \eqref{stimereversible}.
\end{proof}
\begin{remark}
{\rm
From a coordinate-free perspective, $G$ and $Q$ define linear maps $G: \X \to \X^*$ and $Q: \X \to \X^*$ (with $\X^*$ the dual space), and $R$ defines an involution map $R: \X \to \X$. These maps $G,Q$ and $R$ satisfy the \emph{commutativity} relations $R=Q^{-1}G=G^{-1}Q$.
}
\end{remark}
\begin{remark}
{\rm
Conversely it can be shown that cyclo-passivity together with signed time-reversibility implies reciprocity and cyclo-losslessness; see the corresponding statement for the non-cyclo case in \cite{willems72}.
}
\end{remark}
A strengthened version of Proposition \ref{prop:4.2} is obtained whenever $Q>0$, i.e., the system is lossless. Indeed, if $Q>0$ and the system is reciprocal with pseudo-inner product $G$, then there are coordinates $x=(x_1,x_2)$ such that $Q$ and $G$ take the form \eqref{compatible1}, and thus $R= \bma I & 0 \\0 & -I \ema$.
Rewriting the system in pseudo-gradient system form \eqref{gradient} it follows  that the system takes the form (compare with \eqref{gradient3})
\bq
\begin{array}{rcl}
\bma Q_1 & 0 \\[2mm] 0 & -Q_2 \ema \dot{x} & = & - \bma 0 & P_c \\[2mm] P_c^\top & 0 \ema x + \bma C_1^\top \\[2mm] 0 \ema u\\
y & = & \bma C_1 & 0 \ema x +Du
\end{array}
\eq
This is a lossless system with two types of energy $\frac{1}{2}x_1^\top Q_1 x_1$ and $\frac{1}{2}x_2^\top Q_2 x_2$, and power-conserving interconnection structure defined by $P_c$. An example is provided by LC electrical networks, where $x_1$ refer to capacitor voltages and $x_2$ to inductor currents.
\begin{remark}
{\rm
It follows that \emph{relaxation systems} are typically \emph{not} signed time-reversible. In fact, since $G>0$, a relaxation system is signed time-reversible if and only if it is of the \emph{integrator} form (and thus energy-conserving)
\bq
G \dot{x} = C^\top u, \; G=G^\top >0, \qquad y=Cx.
\eq
}
\end{remark}

\subsection{IO Hamiltonian systems and time-reversibility}
Throughout this subsection we take $D=0$ and $\sigma=I$ for ease of exposition. 
As noted before, the transfer matrix $K(s)$ of an IO Hamiltonian system is characterized by the property $K(s)=K^\top (-s)$. On the other hand, reciprocity amounts to $K(s)=K^\top (s)$. Taken together this implies $K(s)=K^\top(-s)= K(-s)$. This last property corresponds to \emph{time-reversibility}: whenever $(u(t),y(t)), t \in \mR,$ is an input-output trajectory of the system, then so is the time-reversed version $(u(-t),y(-t)), t \in \mR$. (Note that this is \emph{different} from \emph{signed} time-reversibility, where we needed a minus sign in front of the time-reversed input function.) Similar reasoning as for signed time-reversibility of reciprocal systems implies that among the three notions of $1)$ IO Hamiltonian, $2)$ reciprocity, $3)$ time-reversibility, \emph{two} of these three notions actually \emph{imply} the third one.

From a \emph{state space} point of view this is seen as follows. For simplicity of exposition let $\Sigma$ be given by a minimal triple $(A,B,C)$ with $D=0$. Being \emph{IO Hamiltonian} corresponds by \eqref{IOham} to the existence of a unique invertible $\Omega$ such that \bq
\label{1}
A^\top \Omega + \Omega A=0,  \; B^\top \Omega=C,  \; \Omega=-\Omega^\top.
\eq
On the other hand, \emph{reciprocity} amounts to the existence of a unique invertible $G=G$ such that
\bq
\label{2}
A^\top G=GA, \; B^\top G=C, \; G=G^\top.
\eq
Finally, \emph{time-reversibility} amounts to the existence of a unique invertible $R$ such that
\bq
\label{3}
RA=-AR, \; RB=-B, \; C=CR, \; R=R^{-1}.
\eq
(Note the extra minus sign in front of $B$ as compared to the equations \eqref{stimereversible} defining \emph{signed} time-reversibility.) 
\begin{proposition}
Consider a minimal system $(A,B,C)$. Then
\begin{enumerate}
\item
Suppose the system satisfies \eqref{1} and \eqref{2}. Then $G \Omega^{-1}G=\Omega$, and $R:=\Omega^{-1}G$ satisfies \eqref{3}.
\item
Suppose the system satisfies \eqref{2} and \eqref{3}. Then $R^\top G R =-G$ and $\Omega:=GR$ satisfies \eqref{1}.
\item
Suppose the system satisfies \eqref{3} and \eqref{1}. Then $R^\top \Omega R=-\Omega$ and $G:=\Omega R$ satisfies \eqref{2}.
\end{enumerate}
Hence if the system satisfies two out of the three equations \eqref{1}, \eqref{2}, \eqref{3}, then it also satisfies the third. 
\end{proposition}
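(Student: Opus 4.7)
The plan is to exploit the fact that minimality of $(A,B,C)$ forces each of the matrices $\Omega$, $G$, $R$ satisfying \eqref{1}, \eqref{2}, \eqref{3} respectively to be unique. I will treat the three cases uniformly: in each one, define the third matrix by the closed-form expression stated in the proposition, verify by direct substitution that it satisfies the relations with $A$, $B$, $C$ required by the third set of equations, and then deduce the remaining symmetry/involution identity ($G\Omega^{-1}G = \Omega$, $R^\top G R = -G$, or $R^\top \Omega R = -\Omega$) by producing an auxiliary matrix that satisfies the defining equations of one of the two given matrices and invoking uniqueness.

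First I record the uniqueness engine. If a square matrix $N$ satisfies $NA = \pm AN$ together with $NB = 0$ and $CN = 0$, then induction on $k$ gives $NA^kB = 0$, so $N$ annihilates the reachable subspace of $(A,B)$; by controllability this is all of $\X$, hence $N = 0$. Consequently any two matrices obeying identical relations of this form coincide.

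For case 1, set $R := \Omega^{-1}G$. Combining $GA = A^\top G$ from \eqref{2} with $\Omega^{-1}A^\top = -A\Omega^{-1}$ (rearranged from $A^\top \Omega = -\Omega A$) gives $RA = -AR$. The equality $B^\top \Omega = B^\top G = C$ transposes to $(\Omega + G)B = 0$, so $GB = -\Omega B$ and hence $RB = -B$; and $CR = C\Omega^{-1}G = B^\top G = C$. For the involution $R^2 = I$, set $M := G\Omega^{-1}G$ and check that $M$ satisfies the defining identities of $\Omega$ in \eqref{1}: skew-symmetry follows from $(\Omega^{-1})^\top = -\Omega^{-1}$, the vanishing of $A^\top M + MA$ follows from the same two identities used above, and $B^\top M = C\Omega^{-1}G = B^\top G = C$ since $C\Omega^{-1} = B^\top$. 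Uniqueness of $\Omega$ then yields $M = \Omega$, so $G\Omega^{-1}G = \Omega$ and hence $R^2 = I$.

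Cases 2 and 3 are parallel. In case 2, set $\Omega := GR$ and verify that $\widetilde G := -R^\top G R$ satisfies \eqref{2}: symmetry is automatic, the relation $A^\top \widetilde G = \widetilde G A$ follows from $A^\top G = GA$ combined with $RA = -AR$, and $B^\top \widetilde G = CR = C$ using $RB = -B$ and $CR = C$. Uniqueness of $G$ then gives $R^\top G R = -G$, i.e., $\Omega^\top = R^\top G = -GR = -\Omega$; the remaining identities $A^\top \Omega + \Omega A = 0$ and $B^\top \Omega = C$ are immediate substitutions. Case 3, with $G := \Omega R$, is symmetric: $\widetilde \Omega := -R^\top \Omega R$ is shown to satisfy \eqref{1}, so $R^\top \Omega R = -\Omega$, giving $G^\top = \Omega R = G$, and the remaining two relations of \eqref{2} follow. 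The main obstacle is a mild pattern-recognition step: noticing that each of the three compatibility identities $G\Omega^{-1}G = \Omega$, $R^\top G R = -G$, $R^\top \Omega R = -\Omega$ is itself just the assertion that a suitably conjugated candidate satisfies the defining equations of one of the two given matrices. Once this observation is made, all three cases reduce to the same short verification.
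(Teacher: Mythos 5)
Your proof is correct and follows essentially the same route as the paper: in each case you form the candidate for the missing matrix as a product of the two given ones, verify the intertwining relations with $A$, $B$, $C$ directly, and obtain the compatibility identity ($G\Omega^{-1}G=\Omega$, $R^\top GR=-G$, or $R^\top\Omega R=-\Omega$) by checking that the conjugated matrix satisfies the defining equations of $\Omega$ or $G$ and invoking uniqueness under minimality. The only addition is that you justify the uniqueness via the controllability argument, which the paper takes for granted from the cited references; all your verifications check out.
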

\begin{proof}
If $\Omega$ and $G$ satisfy \eqref{1}, respectively \eqref{2}, then also $G \Omega^{-1}G$ satisfies \eqref{1}. Hence by uniqueness $G \Omega^{-1}G=\Omega$. It is immediately checked that $R:=\Omega^{-1}G$ satisfies \eqref{3}. This proves the first statement. Proof of the other two statements is analogous.
\end{proof}
Now suppose the IO Hamiltonian system $\Sigma=(A,B,C)$ is time-reversible (and therefore also reciprocal). In view of $R^\top \Omega R=-\Omega$ this means, see \cite{meyer,timereversible}, that there exists a basis for $\X$ in which
\bq
\label{basisJR}
\Omega= \bma 0 & -I \\ I & 0 \ema, \; R= \bma I & 0 \\ 0 & -I \ema, \; \mbox{and therefore } G= \bma 0 & I \\ I & 0 \ema.
\eq
In such a basis with coordinates $x=(q ,p)$ the system takes the form \cite{timereversible}
\bq
\begin{array}{rcl}
\bma \dot{q} \\ \dot{p} \ema & = & \bma 0 & P \\ -Q & 0 \ema + \bma q \\[2mm] p \ema + \bma 0 \\ \tilde{B} \ema u \\[3mm]
y & = & \bma \tilde{B}^\top & 0 \ema \bma q \\ p \ema
\end{array}
\eq
for certain matrices $P=P^\top$, $Q=Q^\top$, and $\tilde{B} $.


\section{Conclusions and outlook}
An overview has been given of linear systems endowed with symmetry structures, with emphasis on reciprocal and input-output Hamiltonian systems. Striking feature is the interplay between state space, input-output, and geometric formulations. Partial extensions to the nonlinear case already exist, see e.g. \cite{SIAMreciprocal,sepulchre24} for the reciprocal case and \cite{crouchvds} for the input-output Hamiltonian case, but many open questions remain.

Apart from the nonlinear generalization an important challenge is how to use the developed theory for design and control. In particular, how to use (linear or nonlinear) systems with symmetry as building blocks in applications such as neuromorphic computation.

\begin{appendices}

\section{Lagrangian subspaces and Dirac structures}\label{secA1}
Let $\F$ be a linear space. Denote by $\E:=\F^*$ its dual space, with $<e,f>$ the (algebraic) duality product between $f \in \F$ and $e \in \E$. Then $\F \times \E$ is endowed with the canonical \emph{symplectic form}
\bq
\label{asym}
\langle (f_a,e_a),(f_b,e_b) \rangle =  <e_a, f_b> - <e_b,f_a>.
\eq
In case $\F=\mR^n$ the symplectic form amounts to the matrix
\bq
\label{J}
J=\bma 0 & -I_n \\ I_n & 0 \ema
\eq
A subspace $\cL \subset \F \times \E$ is \emph{Lagrangian} if the symplectic form $\langle \cdot, \cdot \rangle$ is \emph{zero} restricted to $\cL$, and furthermore $\cL$ is \emph{maximal} with respect to this property. In case $\F$ is an $n$-dimensional linear space this means $\dim \cL=n$. Conversely, any $n$-dimensional subspace $\cL$ on which $\langle \cdot, \cdot \rangle$ is zero is Lagrangian. Furthermore
\begin{proposition}
A subspace $\cL \subset \F \times \E$ is Lagrangian if and only if $\cL = \cL^{\pperp}$, where ${}^{\pperp}$ denotes orthogonal companion with respect to the symplectic form \eqref{asym}.
\end{proposition}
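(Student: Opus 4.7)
The plan is to show the two implications separately, using only that the canonical form $\langle\cdot,\cdot\rangle$ on $\F\times\E$ is \emph{non-degenerate} (from the duality pairing between $\F$ and $\E=\F^*$) and \emph{skew}, i.e.\ $\langle z,z\rangle=0$ for every $z\in\F\times\E$. Both properties follow directly from \eqref{asym}.

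For the forward direction, suppose $\cL=\cL^{\pperp}$. Then every pair of elements in $\cL$ is mutually orthogonal with respect to $\langle\cdot,\cdot\rangle$, so the form vanishes on $\cL$. For maximality, assume $\cL'\supset \cL$ is another subspace on which the form vanishes. Then every $z\in\cL'$ satisfies $\langle z,w\rangle=0$ for all $w\in\cL\subset\cL'$, which means $\cL'\subset\cL^{\pperp}=\cL$, forcing $\cL'=\cL$. Hence $\cL$ is Lagrangian in the paper's sense.

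For the converse, suppose $\cL$ is Lagrangian. Isotropy gives $\cL\subset\cL^{\pperp}$ immediately. The nontrivial inclusion $\cL^{\pperp}\subset\cL$ is where the maximality clause does the work: take any $z\in\cL^{\pperp}$ and consider $\tilde\cL:=\cL+\operatorname{span}(z)$. To see that the form vanishes on $\tilde\cL$, note that for any $w_1,w_2\in\cL$ and scalars $\alpha,\beta$,
\begin{equation}
\langle w_1+\alpha z,\;w_2+\beta z\rangle=\langle w_1,w_2\rangle+\alpha\langle z,w_2\rangle+\beta\langle w_1,z\rangle+\alpha\beta\langle z,z\rangle,
\end{equation}
and all four summands vanish: the first by isotropy of $\cL$, the middle two because $z\in\cL^{\pperp}$, and the last by skew-symmetry. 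So $\tilde\cL$ is isotropic and contains $\cL$; by maximality $\tilde\cL=\cL$, hence $z\in\cL$. This gives $\cL^{\pperp}\subset\cL$ and completes the equality.

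I do not expect any real obstacle: the argument is essentially bookkeeping, and the only point that requires a moment's care is verifying that $\tilde\cL$ is isotropic, which hinges on $\langle z,z\rangle=0$ (i.e.\ skew-symmetry of the canonical form). In finite dimensions one can alternatively short-circuit the second half by invoking the standard identity $\dim\cL+\dim\cL^{\pperp}=\dim(\F\times\E)$ coming from non-degeneracy, which together with $\cL\subset\cL^{\pperp}$ and $\dim\cL=n$ forces $\cL=\cL^{\pperp}$; I would mention this only as a remark since the general-$\F$ argument above does not need it.
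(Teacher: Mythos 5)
Your argument is correct and complete: both directions follow cleanly, and the one point that needs care --- that $\tilde\cL=\cL+\spa(z)$ is isotropic, which rests on the alternating property $\langle z,z\rangle=0$ --- is handled explicitly. The paper itself states this proposition without proof (it is offered as a standard fact about the canonical symplectic form), so there is no in-paper argument to compare against; yours is the standard maximal-isotropic-subspace proof and, as you note, it never actually needs non-degeneracy, only that the form is alternating, while the dimension-count shortcut you mention in closing is the usual finite-dimensional alternative.
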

Let $\cL \subset \F \times \E$ be Lagrangian. If there exists a mapping $S: \F \to \E$ such that $\cL$ is the graph of $S$, then $S$ is \emph{symmetric}. Conversely, for any symmetric $S: \F \to \E$ the subspace $\mbox{graph } S$ is Lagrangian. Similarly, if there is a mapping $S: \E \to \F$ such that $\cL$ is the graph of $S$.

If $\cL$ can\emph{not} be parametrized, either by $\F$ or by $\E$, we obtain the following general result in case $\F$ is finite-dimensional. Let $\F=\mR^n$. Denote the linear coordinates for $\F$ and $\E$ by $f_1, \cdots,f_n$ and $e_1, \cdots, e_n$. Then for any Lagrangian subspace $\cL$ there exists a partitioning $\{1, \cdots,n\} = I_1 \cup I_2$ such that $\cL$ is parametrized by the coordinates $f_i, i \in I_1,$ and $e_i, i \in I_2$. Write (possibly after joint permutations in $f$ and $e$)
\bq
f= \bma f^1 \\f^2 \ema, \quad e= \bma e^1 \\e^2 \ema,
\eq
where the components of $f^1$ are $f_i, i \in I_1,$ and of $f^2$ are $f_i, i \in I_2$. Same for $e^1$ and $e^2$. Thus there exists an $n \times n$ matrix $S$ such that
\bq
\label{LagS}
\cL = \{ \bma f^1 \\f^2 \\ e^1 \\e^2 \ema \mid \bma e^1 \\f^2 \ema = S \bma f^1 \\e^2 \ema \}
\eq
Then $S$ satisfies
\bq
\label{S}
\bma I_{n_1} & 0 \\ 0 & -I_{n_2} \ema S = S^\top \bma I_{n_1} & 0 \\ 0 & -I_{n_2} \ema,
\eq
where $n_1$, respectively $n_2$, is the cardinality of $I_1$, respectively $I_2$. Conversely, for any $S$ satisfying \eqref{S} the subspace \eqref{LagS} is a Lagrangian subspace. Note that $\bma I_{n_1} & 0 \\ 0 & -I_{n_2} \ema$ is a \emph{signature matrix}. The quadratic function
\bq
\bma (f^1)^\top & (e^2)^\top \ema S \bma f^1 \\e^2 \ema
\eq
is called a \emph{generating function} of the Lagrangian subspace \eqref{LagS}. 

On the other hand, \emph{Dirac structures} can be regarded as 'skew-symmetric' counterparts of Lagrangian subspaces. Instead of \eqref{sym} consider the following bilinear form on $\F \times \E$ (replace $-$ by $+$)
\bq
\label{sym}
[[ (f_a,e_a),((f_b,e_b) ]] :=  <e_a, f_b> + <e_b,f_a>
\eq
A subspace $\D \subset \F \times \E$ is a (constant) \emph{Dirac structure} if the bilinear form $[[ \cdot, \cdot ]]$ is \emph{zero} restricted to $\D$, and furthermore $\D$ is \emph{maximal} with respect to this property. In case $\F$ is an $n$-dimensional linear space this means that $\dim \D=n$. Conversely, any $n$-dimensional subspace $\D$ on which $[[ \cdot, \cdot ]]$ is zero is a Dirac structure. Furthermore
\begin{proposition}
A subspace $\D \subset \F \times \E$ is a Dirac structure if and only if $\D = \D^{\pperp}$, where ${}^{\pperp}$ denotes orthogonal companion with respect to the symmetric bilinear form \eqref{sym}.
\end{proposition}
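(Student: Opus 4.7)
The plan is to mimic the standard duality argument that underlies the analogous Lagrangian proposition stated just above, adapted to the symmetric bilinear form $[[\cdot,\cdot]]$. Concretely, I will first observe that on the finite-dimensional space $\F\times\E$ with $\dim\F=n$, the bilinear form $[[\cdot,\cdot]]$ is nondegenerate: choosing dual bases $(f_i)$ of $\F$ and $(e_i)$ of $\E$, the Gram matrix of $[[\cdot,\cdot]]$ is the symmetric matrix $\bma 0 & I_n \\ I_n & 0 \ema$, whose determinant is $(-1)^n\neq 0$. Nondegeneracy immediately gives the dimension identity
\bq
\label{dimid}
\dim W + \dim W^{\pperp} = 2n
\eq
for every subspace $W\subseteq \F\times\E$.

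Next I will translate the isotropy condition into a containment: the bilinear form $[[\cdot,\cdot]]$ vanishes identically on a subspace $W$ if and only if $W\subseteq W^{\pperp}$, simply by unwinding what it means for every element of $W$ to be orthogonal to every other element of $W$.

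With these two facts the equivalence follows quickly. For the forward direction, assume $\D$ is a Dirac structure. Then $\D\subseteq\D^{\pperp}$ by the isotropy step, so by \eqref{dimid} one has $\dim\D\leq 2n-\dim\D$, hence $\dim\D\leq n$. If the inequality were strict, one could pick any $z\in\D^{\pperp}\setminus\D$ and form $\D':=\D+\spa(z)$; since $z\in\D^{\pperp}$ and $[[z,z]]$ can be absorbed into the span (here the subtlety—see below), one checks that $[[\cdot,\cdot]]$ still vanishes on $\D'$, contradicting maximality. Hence $\dim\D=n$ and then \eqref{dimid} together with $\D\subseteq\D^{\pperp}$ forces $\D=\D^{\pperp}$. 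For the converse, if $\D=\D^{\pperp}$, then $\D\subseteq\D^{\pperp}$ gives isotropy, while any larger isotropic $\D'\supsetneq\D$ would satisfy $\D'\subseteq(\D')^{\pperp}\subseteq\D^{\pperp}=\D$, a contradiction, so $\D$ is maximal.

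The one step needing care—the main (minor) obstacle—is enlarging $\D$ by $z\in\D^{\pperp}\setminus\D$ while preserving isotropy. Because $[[\cdot,\cdot]]$ is \emph{symmetric} rather than skew, one cannot rely on $[[z,z]]=0$ automatically as in the Lagrangian/symplectic setting. The fix is standard: within the isotropic cone of the restriction of $[[\cdot,\cdot]]$ to $\D^{\pperp}$, one may adjust $z$ by an element of $\D$ (or, if $[[z,z]]\neq 0$, combine $z$ with any $z'\in\D^{\pperp}\setminus\D$ linearly independent from $\D\cup\{z\}$) to ensure the resulting enlargement remains isotropic; this is possible precisely when $\dim\D<n$, since the orthogonal complement $\D^{\pperp}/\D$ inherits a nondegenerate symmetric form on a positive-dimensional space and hence contains nonzero isotropic vectors as soon as its dimension exceeds the Witt index bound, which is automatic from the hyperbolic signature of the ambient form $\bma 0 & I_n \\ I_n & 0 \ema$. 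Once this enlargement lemma is in place, the proof concludes as above.
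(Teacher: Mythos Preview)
The paper states this proposition without proof; it is quoted as a standard fact from the Dirac structure/port-Hamiltonian literature (see e.g.\ \cite{jeltsema}). So there is no ``paper's own proof'' to compare against, and your argument must stand on its own merits.

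Your overall architecture is sound: nondegeneracy of $[[\cdot,\cdot]]$ yields the dimension identity $\dim W+\dim W^{\pperp}=2n$, isotropy is equivalent to $W\subseteq W^{\pperp}$, and your converse direction (from $\D=\D^{\pperp}$ to maximality) is clean. The one place that needs tightening is the enlargement step in the forward direction. The suggestion to ``adjust $z$ by an element of $\D$'' is a red herring: for $z\in\D^{\pperp}$ and $d\in\D$ one has $[[z+d,z+d]]=[[z,z]]+2[[z,d]]+[[d,d]]=[[z,z]]$, so such an adjustment cannot create isotropy. The follow-up ``combine $z$ with any $z'$'' is too vague to constitute an argument. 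What actually works is precisely the clause you place last: the induced form on $\D^{\pperp}/\D$ is nondegenerate of signature $(n-k,n-k)$ when $\dim\D=k$, inherited from the ambient hyperbolic signature $(n,n)$ of $\bma 0 & I_n\\ I_n & 0\ema$; for $k<n$ this quotient is indefinite and therefore has nonzero isotropic vectors, any lift of which furnishes the required $z\in\D^{\pperp}\setminus\D$ with $[[z,z]]=0$. Lead with that and drop the false starts, and the proof is complete. (An equivalent shortcut: the Witt index of the ambient form is $n$, witnessed by the totally isotropic subspaces $\F\times\{0\}$ and $\{0\}\times\E$, so any isotropic subspace of dimension $<n$ extends.)
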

Let $\D \subset \F \times \E$ be a Dirac structure. If there exists a mapping $Z: \F \to \E$ such that $\D$ is the graph of $Z$, then $Z$ is \emph{skew-symmetric}. Conversely, for any skew-symmetric $Z: \F \to \E$ the subspace $\mbox{graph } Z$ is a Dirac structure. Similarly, if there is a mapping $Z: \E \to \F$ such that $\D$ is the graph of $Z$.

\subsection{Static reciprocity and losslessness}
\label{recloss}
Roughly speaking, static \emph{lossless} structures correspond to \emph{skew-symmetric} matrices, while \emph{reciprocal} structures correspond to \emph{symmetric} matrices. In this sense losslessness and reciprocity could be expected to \emph{exclude each other}. The purpose of this subsection is to show that this is \emph{not} necessarily the case if we consider reciprocity \emph{with respect to a signature matrix}. 

First recall from \cite{SIAMgraphs} that a Dirac structure $\D \subset \F \times \F^*$, with $\F$ a linear space, is called \emph{separable} if
$\D = \K \times \K^\perp$
for some subspace $\K \subset \F$. Furthermore, as shown in \cite{SIAMgraphs} that a Dirac structure $\D$ is separable if and only if
\bq
\label{tellegen}
<e_b, f_a> = 0
\eq
for all pairs $(f_a,e_a), (f_b,e_b) \in \D \subset \F \times \F^*$. 
A typical example of a separable Dirac structure is the space of currents and voltages constrained by Kirchhoff's current and voltage laws. It is separable because Kirchhoff's \emph{current} laws are decoupled from Kirchhoff's \emph{voltage} laws. The expression \eqref{tellegen} in this context is Tellegen's theorem $<V_1, I_2>=0$ for all $V_1$ satisfying Kirchhoff's voltage laws, and all $I_2$ satisfying Kirchhoff's current laws (where $V_1$ and $I_2$ need not be the \emph{actual} voltages and currents). 
Since transformers do not mix voltages and currents either, also Kirchhoff's current and voltage laws \emph{together with} transformers define separable Dirac structures. 

We have the following alternative characterization of separable Dirac structures.
\begin{proposition}
A Dirac structure $\D \subset \F \times \F^*$ is separable if and only if it is Lagrangian.
\end{proposition}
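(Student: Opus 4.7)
The plan is to reduce both directions to the Tellegen-type characterization of separable Dirac structures already cited from \cite{SIAMgraphs}, namely that $\D$ is separable if and only if $\langle e_b, f_a \rangle = 0$ for all $(f_a, e_a), (f_b, e_b) \in \D$.

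For the ``Lagrangian $\Rightarrow$ separable'' direction, I would start from the observation that $\D$ is by hypothesis both a Dirac structure and a Lagrangian subspace. The Dirac property gives that $[[(f_a,e_a),(f_b,e_b)]] = \langle e_a, f_b\rangle + \langle e_b, f_a\rangle$ vanishes on $\D \times \D$, while the Lagrangian property gives that $\langle (f_a,e_a),(f_b,e_b)\rangle = \langle e_a, f_b\rangle - \langle e_b, f_a\rangle$ also vanishes on $\D \times \D$. Adding these two identities immediately yields $\langle e_a, f_b\rangle = 0$ for all pairs in $\D$, which is exactly the Tellegen condition \eqref{tellegen}, so $\D$ is separable.

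For the converse, I would start from separability $\D = \K \times \K^\perp$ and invoke the cited result \eqref{tellegen} again in the opposite direction: $\langle e_b, f_a\rangle = 0$ (and hence also $\langle e_a, f_b\rangle = 0$) for all $(f_a,e_a), (f_b,e_b) \in \D$. This makes the symplectic form \eqref{asym} vanish identically on $\D$, so $\D$ is an isotropic subspace of $\F \times \F^*$ with respect to $\langle \cdot, \cdot \rangle$. Since $\D$ is already a Dirac structure, $\dim \D = \dim \F = n$, which is precisely the dimension needed for maximal isotropy with respect to the symplectic form; therefore $\D$ is Lagrangian by the dimension criterion stated earlier in the appendix.

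I do not expect any real obstacle: the whole argument is the algebraic identity that the symmetric and skew-symmetric pairings differ only by a sign in one term, so simultaneous vanishing of both is equivalent to vanishing of $\langle e, f\rangle$ alone. The only subtle point worth flagging explicitly is the dimension-counting step in the converse, where one uses that any $n$-dimensional isotropic subspace of $\F \times \F^*$ is automatically maximal isotropic (hence Lagrangian), which was recorded just after the definition of Lagrangian subspace in the appendix.
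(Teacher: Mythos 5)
Your proposal is correct and follows essentially the same argument as the paper: both directions reduce to the observation that simultaneous vanishing of the symmetric pairing $<e_a,f_b>+<e_b,f_a>$ and the skew pairing $<e_a,f_b>-<e_b,f_a>$ on $\D$ is equivalent to the Tellegen condition $<e_b,f_a>=0$ characterizing separability. The only difference is that you spell out the dimension-counting step ($\dim\D=n$ plus isotropy implies Lagrangian) in the separable-to-Lagrangian direction, which the paper leaves implicit; this is a harmless and indeed welcome elaboration.
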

\begin{proof}
Let $\D$ be a separable Dirac structure. Take any two $(f_a,e_a),$$ (f_b,e_b) \in \D$. Then by \eqref{tellegen} $<e_a, f_b> = 0, <e_b, f_a> = 0$. Hence $<e_a, f_b>  - <e_b, f_a>=0$, and thus $\D$ is Lagrangian. Conversely, let $\D$ be a Dirac structure that is also Lagrangian. Since $\D$ is a Dirac structure $<e_a, f_b> + <e_b, f_a>=0$ for any two $(f_a,e_a), (f_b,e_b) \in \D$. Since $\D$ is Lagrangian also $<e_a, f_b>  - <e_b, f_a>=0$, implying that $<e_a, f_b> =0, <e_b, f_a>=0$. Hence $\D$ is separable.
\end{proof}
From an explicit equational point of view the equivalence between separability of $\D$ and $\D$ being Lagrangian is seen as follows. For any Dirac structure $\D \subset \F \times \E$ there exist square matrices $F,E$, satisfying $FE^\top + EF^\top =0$, such that $\D = \ker \bma F & E \ema$; cf. \cite{jeltsema}. Furthermore, cf. \cite{bloch}, modulo column permutations, one can always split $F= \bma F_1 & F_2 \ema$ and correspondingly $E= \bma E_1 & E_2 \ema$ such that $\bma F_1 & E_2 \ema$ is invertible. Denote by $f= \bma f^1 \\ f^2 \ema$ and $e= \bma e^1 \\ e^2 \ema$ the corresponding splitting of the vectors $f,e$. 

Now let $\D$ be separable, i.e. $\D = \K \times \K^\perp$ for some subspace $\K \subset \F$. Then it follows that modulo row permutations $F$ and $E$ take the form
\bq
F = \bma  \bar{F}_1 & \bar{F}_2 \\ 0 & 0  \ema, \quad E = \bma  0 & 0 \\\bar{E}_1 & \bar{E}_2 \ema,
\eq
where $\K= \ker \bma  \bar{F}_1 & \bar{F}_2 \ema$. It follows that
\bq
\bma f^1 \\[2mm] e^2 \ema = \bma \bar{F}_1 & 0 \\[2mm] 0 & \bar{E}_2 \ema^{-1} \bma 0 & \bar{F}_2  \\[2mm] \bar{E}_1 & 0 \ema \bma e^1 \\[2mm] f^2 \ema = \bma 0 & G \\[2mm] -G^\top & 0\ema \bma e^1 \\[2mm] f^2 \ema,
\eq
for some matrix $G$.

Clearly the skew-symmetric matrix $J:=\bma 0 & G \\-G^\top & 0\ema$ satisfies
\bq
\bma I & 0 \\ 0 & -I \ema J = J^\top \bma I & 0 \\ 0 & -I \ema,
\eq
and thus is \emph{reciprocal} with respect to the signature matrix $\bma I & 0 \\ 0 & -I \ema$. Equivalently, $\D$ is Lagrangian with generating function $\frac{1}{2}(e^1)^\top G f^2$.

\section{Hankel kernel analysis of reciprocal systems}\label{secA2}
The symmetric kernel \eqref{symkernel} associated with the Hankel operator $\cH$ of any reciprocal system $\Sigma=(A,B,C,D)$ can be analyzed as follows.
Since the rank of $\cH$ is less than or equal to $n$, the dimension of the state space, the spectrum of $\sigma \cH$ is of the form
\bq
\{\lambda_1, \cdots, \lambda_n \} \cup \{0 \}, \quad \lambda_i \in \mC.
\eq
In fact, as shown in \cite{sorensen} (see already \cite{fernando1} and \cite{fernando2} for the SISO case), 
the eigenvalues $\lambda_1, \cdots, \lambda_n$ are equal to the eigenvalues of the \emph{cross-Gramian} $Z$, defined as the unique solution of the Sylvester equation $AZ + ZA= - B \sigma C$.
Furthermore, since the signature modified Hankel operator $\sigma \cH$ of a reciprocal system is \emph{self-adjoint} its eigenvalues $\lambda_1, \cdots, \lambda_n$ are \emph{real} (and equal to the singular values of $\sigma \cH$). 
This leads to the following explicit expression of the eigenvalues $\lambda_1, \cdots, \lambda_n$ of $\sigma \cH$ together with their eigenfunctions $\phi_1, \cdots, \phi_n$. Consider for the eigenfunctions the \emph{ansatz} $\phi (t):= B^\top e^{A^\top t} \C^{-1} x$,
with $\C:= \int_0^{\infty} e^{A\tau}BB^\top e^{A^\top \tau} d \tau$ the \emph{controllability Gramian} of the pair $(A,B)$. One obtains
\bq
\sigma \left( \cH \phi \right)(t) = B^\top e^{A^\top t} G \left(\int_0^{\infty} e^{A\tau}BB^\top e^{A^\top \tau} d \tau \right) \C^{-1}x = 
B^\top e^{A^\top t} Gx.
\eq
It follows that $\phi_i(t):= B^\top e^{A^\top t} \C^{-1} x_i$ is an eigenfunction corresponding to $\lambda_i$ if and only if $Gx_i=\lambda_i \C^{-1}x_i$, or equivalently $\C G x_i= \lambda_i x_i$.
Now, as shown in \cite{ionescu,scherpen}, $\C G$ equals the cross-Gramian $Z$.
Thus the eigenfunctions $\phi_i$ are generated by eigenvectors $x_1, \cdots,x_n$ of $\C G$ corresponding to the real eigenvalues $\lambda_1, \cdots,\lambda_n$ of $\C G$. 
\begin{remark}
{\rm Furthermore\cite{scherpen}, $Z=G^{-1} \cO$ and $Z^2=\C \cO$, where $\cO := \int_0^{\infty} e^{A^\top \tau}C^\top \sigma \sigma C e^{A \tau} d \tau = \int_0^{\infty} e^{A^\top \tau}C^\top C e^{A \tau} d \tau$ is the \emph{observability Gramian}.} 
\end{remark}
Since the signature matrix modified Hankel operator $\sigma \cH$ is self-adjoint the eigenfunctions $\phi_i$ are \emph{orthogonal} with respect to the $L_2(0,\infty)$ inner product. This means that for $i \neq j$ eigenvectors $x_i$ of $Z= \C G$ satisfy
\bq
0 = \int_0^{\infty} x_i^\top \C^{-1} e^{At} B B^\top e^{A^\top t} \C^{-1}x_j dt = x_i^\top \C^{-1} x_j
\eq
Furthermore, the orthogonal eigenfunctions $\phi_1, \cdots,\phi_n$ can be turned into an \emph{orthonormal} set by scaling $x_i$ such that $x_i \C^{-1}x_i = 1, \quad i=1,\cdots,n$. Then one verifies that the resulting eigenfunctions $\phi_1, \cdots, \phi_n$ satisfy $B^\top e^{At} G e^{A\tau}B = \sum_{i=1}^n \lambda_i \phi_i(t) \phi_i^\top(\tau)$.
This can be considered as a (finite-dimensional) version of Mercer's theorem (where the eigenvalues $\lambda_1, \cdots, \lambda_n$ need not be nonnegative). Summarizing:
\begin{proposition}
Consider an $n$-dimensional system $\Sigma=(A,B,C,D)$ with Hankel operator $\cH$. Suppose the system is reciprocal with respect to $\sigma$. Then $\sigma \cH$ has, apart from zero eigenvalues, $n$ real eigenvalues $\lambda_1, \cdots,\lambda_n$, given as the eigenvalues of $\C G$, where $G=G^\top$ is determined by \eqref{G12}, and $\C$ is the controllability Gramian. Furthermore $\lambda_i \neq 0, i=1, \cdots,n,$ if and only if the system is controllable. The corresponding eigenfunctions are given as
\bq
\phi_i(t)= B^\top e^{A^\top t} \C^{-1} x_i, \quad i=1, \cdots, n,
\eq
where $x_i, i=1, \cdots,n,$ are eigenvectors of $\C G$. By scaling $x_i$ such that $x_i\C^{-1}x_i=1, i=1, \cdots,n,$ it follows that $\phi_1, \cdots,\phi_n$ is an orthonormal set in $L_2[0,\infty)$, and the impulse response matrix $W(t-\tau)$ of the system satisfies
\bq
\sigma W(t+\tau)= B^\top e^{At} G e^{A\tau}B = \sum_{i=1}^n \lambda_i \phi_i(t) \phi_i^\top(\tau)
\eq
Furthermore $\lambda_i > 0, i=1, \cdots,n,$ if and only if $G> 0$.
\end{proposition}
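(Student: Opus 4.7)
The plan is to substitute the ansatz $\phi(t)=B^\top e^{A^\top t}\C^{-1}x$ into the Hankel kernel formula~\eqref{symkernel} and reduce the infinite-dimensional eigenvalue problem for $\sigma\cH$ to the matrix eigenvalue problem for $\C G$. Under the Hurwitz assumption the integral $\int_0^\infty e^{A\tau}BB^\top e^{A^\top\tau}\,d\tau=\C$ is well-defined, so plugging the ansatz into $\sigma\cH$ collapses the inner integral and gives $(\sigma\cH\phi)(t)=B^\top e^{A^\top t}Gx$. Imposing $(\sigma\cH\phi)(t)=\lambda\phi(t)$ then reads $B^\top e^{A^\top t}(Gx-\lambda\C^{-1}x)\equiv 0$; controllability of $(A,B)$, equivalent to observability of $(A^\top,B^\top)$, forces the bracketed vector to vanish, yielding $\C Gx=\lambda x$. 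The converse direction is a direct substitution. The identification $\C G=Z$ quoted from \cite{ionescu,scherpen} then gives the cross-Gramian interpretation.

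For the spectral structure, I would exploit that $\C G$ is similar to the symmetric matrix $\C^{1/2}G\C^{1/2}$ via $\C^{1/2}(\cdot)\C^{-1/2}$; hence the spectrum is real and admits a complete basis of eigenvectors $x_1,\dots,x_n$, which can be orthogonalized within each eigenspace with respect to the inner product induced by $\C^{-1}$. Since $G$ is invertible by~\eqref{G12}, $\rank(\C G)=\rank(\C)$, and thus $\lambda_i\neq 0$ for all $i$ iff $(A,B)$ is controllable. Orthonormality of the $\phi_i$ is a direct computation:
\[
\int_0^\infty \phi_i(t)^\top\phi_j(t)\,dt
= x_i^\top\C^{-1}\Bigl(\int_0^\infty e^{At}BB^\top e^{A^\top t}\,dt\Bigr)\C^{-1}x_j
= x_i^\top\C^{-1}x_j,
\]
which equals $\delta_{ij}$ after the chosen normalization (off-diagonal vanishing coming from self-adjointness of $\sigma\cH$ across distinct eigenspaces and from the orthogonalization inside each eigenspace).

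For the Mercer-type identity, combine the completeness relation $\sum_i x_ix_i^\top\C^{-1}=I$ with $Gx_i=\lambda_i\C^{-1}x_i$ to obtain $G=\sum_i\lambda_i\,\C^{-1}x_ix_i^\top\C^{-1}$; sandwiching between $B^\top e^{A^\top t}$ and $e^{A\tau}B$ then produces $\sum_i\lambda_i\phi_i(t)\phi_i^\top(\tau)$, matching the symmetric Hankel kernel $B^\top e^{A^\top t}Ge^{A\tau}B$. Positivity follows from Sylvester's law of inertia: the congruence $G\mapsto\C^{1/2}G\C^{1/2}$ by the invertible factor $\C^{1/2}$ preserves signature, and this matrix is similar to $\C G$, so $\lambda_i>0$ for all $i$ iff $G>0$. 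The step I expect to be most delicate is the uncontrollable case, where $\C$ is singular and the ansatz breaks down literally; this is handled by observing that the range of $\sigma\cH$ consists precisely of functions $t\mapsto B^\top e^{A^\top t}Gx(0)$ with $x(0)$ reachable, so every nonzero eigenfunction automatically sits in the controllable subspace on which $\C$ is invertible, reducing the analysis there.
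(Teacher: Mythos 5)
Your proposal is correct and follows essentially the same route as the paper: substituting the ansatz $\phi(t)=B^\top e^{A^\top t}\C^{-1}x$ into the Hankel kernel to reduce the eigenvalue problem to $\C G x=\lambda x$, identifying $\C G$ with the cross-Gramian, and obtaining orthonormality from $\int_0^\infty\phi_i^\top\phi_j\,dt=x_i^\top\C^{-1}x_j$. The only divergence is cosmetic but welcome: where the paper infers reality of the $\lambda_i$ from self-adjointness of $\sigma\cH$ and leaves the positivity claim and the Mercer identity to the reader, you supply self-contained matrix arguments (similarity of $\C G$ to the congruent matrix $\C^{1/2}G\C^{1/2}$, Sylvester's law of inertia, and the completeness relation $\sum_i x_ix_i^\top\C^{-1}=I$), and you correctly flag the need to orthogonalize within repeated eigenspaces and to restrict to the reachable subspace when $\C$ is singular.
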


\end{appendices}




\end{document}